\definecolor{webgreen}{rgb}{0,.5,0}
\definecolor{webbrown}{rgb}{.6,0,0}
\newcommand{\seqnum}[1]{\href{http://oeis.org/#1}{\underline{#1}}}
\begin{document}

\theoremstyle{plain}
\newtheorem{theorem}{Theorem}
\newtheorem{corollary}[theorem]{Corollary}
\newtheorem{lemma}[theorem]{Lemma}
\newtheorem{proposition}[theorem]{Proposition}
\newtheorem{obs}[theorem]{Observation}

\theoremstyle{definition}
\newtheorem{definition}[theorem]{Definition}
\newtheorem{example}[theorem]{Example}
\newtheorem{conjecture}[theorem]{Conjecture}

\theoremstyle{definition}
\newtheorem{remark}[theorem]{Remark}
\begin{center}
\vskip 1cm

{\LARGE\bf Arc coloring of odd graphs for hamiltonicity}
\vskip 1cm
\large
Italo J. Dejter\\
University of Puerto Rico\\
Rio Piedras, PR 00936-8377\\
\href{mailto:italo.dejter@gmail.com}{\tt italo.dejter@gmail.com} \\
\end{center}

\begin{abstract}\noindent
Coloring the arcs of biregular graphs was introduced with possible applications to industrial chemistry, molecular biology, cellular neuroscience, etc. Here, we deal with arc coloring in some non-bipartite graphs. In fact, for $1<k\in\mathbb{Z}$, we find that the odd graph $O_k$ has an arc factorization with colors $0,1,\ldots,k$ such that the sum of colors of the two arcs of each edge equals $k$. This is applied to analyzing the influence of such arc factorizations in recently constructed uniform 2-factors in $O_k$ and in Hamilton cycles in $O_k$ as well as in its double covering graph known as the middle-levels graph $M_k$.
\end{abstract}

\section{Introduction}\label{s1}

Let $0<k\in\mathbb{Z}$, let $n=2k+1$ and let $O_k$ be the $k${\it -odd graph} \cite{B}, that we consider as the graph whose vertices are the $k$-subsets of the cyclic group $\mathbb{Z}_n$ over the set $[0,2k]=\{0,1,\ldots, 2k\}$ having an edge, denoted $uv$, between each two vertices $u,v$ if and only if $u\cap v=\emptyset$.

Coloring the arcs of biregular graphs was considered in \cite{DD}, with potential applications to the design of experiments for industrial chemistry, molecular biology, cellular neuroscience and solving 3-dimensional puzzles like the one known as Great Circle Challenge.
It would be also valuable to find likewise applications of arc coloring to graphs other than bipartite graphs, like the odd graphs, for example, and any other similar graphs in that all vertices have departing arcs with all weights (colors) from 0 to $k$ such that the sum of oppositely oriented arcs is constantly $k$.
In this work, coloring the arcs of $O_k$ occupies the place of missing 1-factorizations, since the Petersen graph $O_3$ is 4-edge-colorable and if $k$ is a power of 2 then $O_k$ is $k+1$-edge-colorable \cite{B}. Resulting arc-factorizations in Section~\ref{modular} are seen in Section~\ref{verL} to influence recent uniform 2-factors and Hamilton cycles of $O_k$ \cite{u2f,Hcs}.

In fact, we recur in Section~\ref{modular} to an {\it edge-supplementary 1-arc factorization} $\mathbb{A}_k$ of $O_k$, meaning that the two oppositely oriented arcs (1-arcs, in \cite[p.~59]{GR}) of each edge of $O_k$ are assigned {\it colors} $a,b\in\{0,\ldots,k\}=[0,k]$ by means of $\mathbb{A}_k$ such that $a+b=k$ (so $a,b$ are said to be $k$-{\it supplementary} or {\it supplementary in} $k$), in such a way that the arcs departing from each vertex are in one-to-one correspondence with $[0,k]$.

 To define the claimed edge-supplementary 1-arc factorization $\mathbb{A}_k$ of $O_k$, we consider in Section~\ref{modular} a partition of $V(O_k)$  into $\mathbb{Z}_n$-{\it classes}, namely the cyclic equivalence classes mod $n$.

To get these $\mathbb{Z}_n$-classes, we take each vertex $u$ of $O_k$ expressed as the characteristic vector of the subset $u\subset\mathbb{Z}_n$ it represents.
Each such vector is a binary string, or {\it bitstring}, namely a sequence of digits 0 and 1 said to be 0-{\it bits} and 1-{\it bits}, respectively.

The number of bits (resp., 1-bits) of a bitstring $u$ is said  to be its {\it length} (resp., its {\it weight}). Each $u\in V(O_k)$ can be seen as a bitstring of length $n$ said to be an {\it $n$-bitstring}.

\begin{example}\label{o1} In $O_1$, the subsets $\{i\}$  of $\mathbb{Z}_3$, ($i=0,1,2$) are denoted $100,010,001$, respectively. \end{example}

We also consider the vertices of $O_k$ as corresponding  polynomials mod $x^n+1$ in the ring $\mathbb{Z}[x]$,
\cite{D2,D1}, namely in Example~\ref{o1}: $x^0$, $x^1$ and $x^2$ mod $x^3 +1$.

The $\mathbb{Z}_n$-classes of $O_k$ are obtained by successive multiplication of such polynomials by $x$ mod $x^n+1$. The resulting equivalence relation defines a quotient graph of $O_k$ whose vertices are those $\mathbb{Z}_n$-classes.
In Example~\ref{o1}, $O_1$ has just one such equivalence class, and $O_2$  has two.

Theorem~\ref{bij} below asserts that there is a bijection between the $\mathbb{Z}_n$-classes of $O_k$ and the {\it Dyck words} of length $2k$, defined in Subsection~\ref{Dyck}  via Example~\ref{PLC} (to Subsection~\ref{alfalfa}), namely with the roles of 0- and 1-bits exchanged with respect to the Dyck words of \cite{Hcs}. This
allows to determine $\mathbb{A}_k$ in Section~\ref{modular} and an arc-coloring analysis through $\mathbb{A}_k$ (not covered in \cite{Hcs}) of:
\begin{enumerate}
\item[\bf(i)] the uniform 2-factors of $O_k$ \cite{u2f} (as in Theorem~\ref{L5}, via Subsections~\ref{rev}-\ref{uu}) and the Hamilton cycles \cite{Hcs} of $O_k$, for $k>2$  (as in Theorem~\ref{L6}, via Subsections~\ref{joder}-\ref{r4});
\item[\bf(ii)] the double covering graph $M_k$ of $O_k$, namely the {\it middle-levels graph} of the {\it Boolean lattice} $B_n$ induced by the levels $L_k$ and $L_{k+1}$ of $B_n$, formed by the $n$-bitstrings of weight $k$ and $k+1$ (with Hamilton cycles lifted from those in item (i), see Corollary~\ref{the-end});
\item[\bf(iii)] the explicit {\it modular} 1-factorization of the graphs $M_k$ \cite{DKS}, with factor colors in $[1,k+1]$ obtained from the color set $[0,k]$ in Section~\ref{modular} by uniformly adding 1.
\end{enumerate}
The modular 1-factorization of $M_k$  cited in item (iii) differ from the {\it lexical} 1-factorization of $M_k$ \cite{KT}. For example, there are at least two different approaches to Hamilton cycles in $M_k$, namely: via the modular 1-factorization \cite{u2f} for $O_k$ in \cite{Hcs}, these represented below in Corollary~\ref{the-end},
as well as via the lexical 1-factorization for $M_k$ (never $O_k$) in \cite{gmn,M}.

\section{Restricted growth strings and k-germs}\label{germs}

To unify presentation of the odd graphs $O_k$, let us consider the sequence $S_{(\infty)}$ \cite[\seqnum{A239903}]{oeis} of {\it restricted-growth strings} ({\it \!RGS}) \cite[p.~325]{Arndt} and the $k$-th Catalan number $C_k=\frac{(2k)!}{k!(k+1)!}$ \cite[\seqnum{A000108}]{oeis}. The first $C_k$ terms of $S_{(\infty)}$ form a set $S_{(u)}$
\cite[p.~222]{Stanley} equivalent to the set $S_{(i)}$ of Dick paths from $(0,0)$ to $(2k,0)$ \cite[p.~221]{Stanley}. Both $S_{(u)}$ and $S_{(i)}$ are items in \cite[ex.~6.19]{Stanley}.

 The sequence ${S_{(\infty)}}$ is expressible as ${S_{(\infty)}}=(\beta(0),\beta(1),\beta(2),\ldots,\beta(17),\ldots)=$
$$(0,1,10,11,12,100,101,110,111,112,120,121,122,123,1000,1001,1010,1011,\ldots),$$
with the lengths of any two contiguous terms $\beta(m-1)$ and $\beta(m)$ ($1\le m\in\mathbb{Z}$) constant unless $m=C_k$, for some integer $k>1$, in which case $\beta(m-1)=\beta(C_k-1)=12\cdots k$ has length $k$ and $\beta(m)=\beta(C_k)=10^k=10\cdots 0$ has length $k+1$.

To manipulate $O_k$ and $M_k$ ($k>1$) in relation to their Hamilton cycles \cite{gmn,M,u2f}, we {\it dress} the RGS's $\beta=\beta(m)$ with length$(\beta)\le k$ as strings of fixed length $k-1$ that we call {\it $k$-germs}  \cite[p.~138]{D2} \cite[p.~8]{D1} in order to show (via the {\it nested castling} of Theorem~\ref{thm1} and  Subsection~\ref{alfalfa}) that $k$-germs form the domain of a bijection $f$ onto the Dyck words of length $2k$. Concretely, we make the {\it $k$-germ} of any such RGS $\beta=\beta(m)$ to be the $(k-1)$-string $\alpha=\alpha(m)=a_{k-1}a_{k-2}\cdots a_2a_1$ obtained from $\beta$ by prefixing $k-$length$(\beta)$ zeros to it.
This makes a $k$-{\it germ} to be a $(k-1)$-string $\alpha=a_{k-1}a_{k-2}\cdots a_2a_1$ such that:

\begin{enumerate}
\item[{\bf(1)}] the leftmost position of $\alpha$, namely position $k-1$, has entry $a_{k-1}\in\{0,1\}$;
\item[{\bf(2)}] given $1<i<k$, the entry $a_{i-1}$ at position $i-1$ satisfies $0\le a_{i-1}\le a_i+1$.
\end{enumerate}

\noindent Note that every $k$-germ $a_{k-1}a_{k-2}\cdots a_2a_1$ yields a $(k+1)$-germ $0a_{k-1}a_{k-2}\cdots$ $a_2a_1$.

To {\it undress} a $k$-germ $\alpha=\alpha(m)=a_{k-1}a_{k-2}\cdots a_1$ $\ne 00\cdots 0$ means that a {\it non-null RGS} is obtained by stripping $\alpha$ of its null entries to the left of its leftmost entry equal  to 1, in which case we denote such a non-null RGS also by $\alpha=\alpha(m)$.
To complement this notion, we say that the {\it null RGS} $\alpha=\alpha(0)=0$ corresponds to all null $k$-germs $\alpha=\alpha(0)$, for $0<k\in\mathbb{Z}$.

We consider also the {\it empty RGS}, denoted $\alpha=\phi$, that yields for $k=1$ the only {\it empty} $k$-germ $\alpha=0^{k-1}=0^{1-1}=\phi$, using the same notation $\phi$ both for the empty RGS and the empty 1-germ and extending this way the general notation $\alpha=0^{k-1}$ ($k>1$) to every $k>0$.

There are exactly $C_k$ $k$-germs $\alpha=\alpha(m)<10^k$, $\forall k>0$.
Given two $k$-germs
$\alpha=a_{k-1}\cdots a_2a_1$ and $\beta=b_{k-1}\cdots b_2b_1,$ ($\alpha\ne \beta$), $\alpha$ is said to be less than $\beta$, written $\alpha<\beta$, if

\begin{enumerate}
\item[{\bf (i)}] either $0=a_{k-1} < b_{k-1}=1$
\item[{\bf (ii)}] or $\exists i\in[2,k-1]$ such that $a_{k-i} < b_{k-i}$ with $a_{k-j}=b_{k-j}$, $\forall j\in[1,i-1]$.
\end{enumerate}

The resulting order on $k$-germs $\alpha(m)$ corresponds bijectively with the natural order of the integers $m\in[0,C_k]$, via the assignment $m\rightarrow\alpha(m)$.

\section{Ordered trees of k-germs and Dyck words}\label{nat}

We recall from \cite[Theorem 3.1]{D2} or \cite[Theorem 1]{D1} that the $k$-germs are the nodes of an ordered tree ${\mathcal T}_k$ rooted at $0^{k-1}$ and such that each $k$-germ $\alpha=a_{k-1}\cdots a_2a_1\ne0^{k-1}$ with rightmost nonzero entry $a_i$  ($1\le i=i(\alpha)<k$) has parent $\beta(\alpha)=b_{k-1}\cdots b_2b_1\!<\alpha$  in ${\mathcal T}_k$ with $b_i= a_i-1$ and $a_j=b_j$, for every $j\ne i$ in $[1,k-1]$.

\begin{example}\label{ej1}
By representing ${\mathcal T}_k$ with each node $\beta$ having its children $\alpha$ enclosed between parentheses following $\beta$ and separating siblings with commas, we can write: $${\mathcal T}_4=000(001,010(011(012)),100(101,110(111(121)),120(121(122(123))))).$$\end{example}

\begin{theorem}\label{thm1} {\bf (i-nested castling)}
To each $k$-germ $\alpha=a_{k-1}\cdots a_1$
corresponds an $n$-string $F(\alpha)$ whose entries are the numbers $0,1,\ldots,k$, once each, and $k$ ``$=$" signs. Moreover,
$$F(0^{k-1})=``012\cdots(k-2)(k-1)k=\cdots =",\; (e.g,\; F(0)=``01=",\; F(00)=``012==").$$
Furthermore, if $\alpha\ne 0^{k-1}$, let
\begin{enumerate}\item $W^i$ and $Z^i$ be the leftmost and rightmost, respectively, substrings of length $i=i(\alpha)$ in $F(\beta)$, where $\beta$ is the parent of $\alpha$ in $\mathcal T_k$;
\item $c>0$ be the leftmost entry of $F(\beta)\setminus(W^i\cup Z^i)$, and
\item $F(\beta)\setminus(W^i\cup Z^i)$ be the concatenation $X|Y$, where $Y$ starts at the entry $c+1$ of $F(\beta)$.
\end{enumerate}
Then $F(\alpha)=W^i|Y|X|Z^i$ is the $i$-{\rm nested castling} of $F(\beta)=W^i|X|Y|Z^i$.
In addition, if an entry $b'\in[0,k]$ of $F(\alpha)$ is followed immediately to its right by an entry $b\in[0,k]$, then $k\ne b'<b$.
 Also, $W^i$ is an ascending number $i$-substring, $Z^i$ is formed by $i$ signs``$=$", and $``k="$  is a substring of $F(\alpha)$, but $``=k"$  is not.
\end{theorem}

\begin{proof}  It was proved in \cite[Theorem 3.2]{D2}, as well as in \cite[Theorem 2]{D1}, where asterisks, ``$*$'', were used instead of the present ``$=$'' signs. Examples~\ref{ej2} and~\ref{PLC} yield ideas on the proof.
\end{proof}

\begin{figure}[htp]
\includegraphics[scale=0.66]{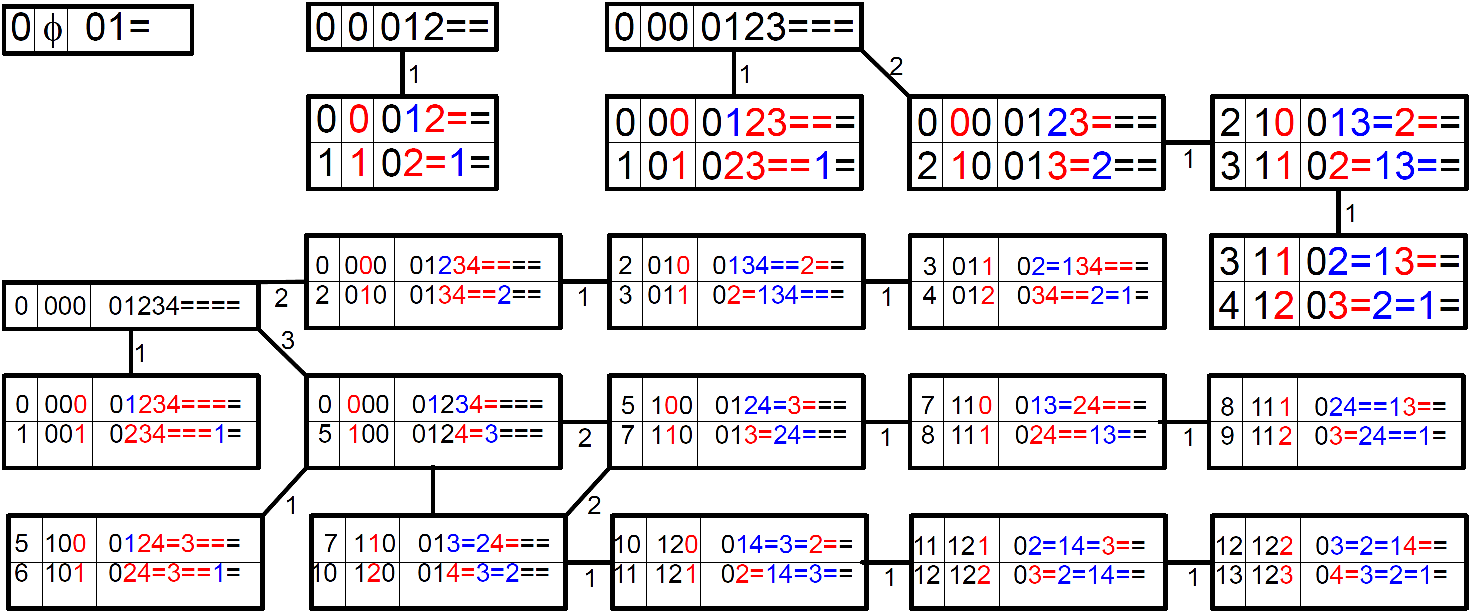}
\caption{Exemplifying Theorem~\ref{thm1} for the ordered trees ${\mathcal T}_k$ ($k=1,2,3,4)$ }
\label{fig1}
\end{figure}

\begin{example}\label{ej2}
Fig.~\ref{fig1} shows each tree ${\mathcal T}_k$ ($k=1,2,3,4$), with its root $0^{k-1}$ represented in a box containing the order $ord(0^{k-1})=0$, the root $0^{k-1}$ and                                                                      $F(0^{k-1})$. Each other node $\alpha$ of ${\mathcal T}_k$ is represented by a box of two levels: the top level contains the order $ord(\beta(\alpha))$, the parent $\beta(\alpha)$ and $F(\beta(\alpha))$; the lower level
contains the order $ord(\alpha)$, $\alpha$ and $F(\alpha)$. In these presentations of $\beta(\alpha)$ and $\alpha$, the entries $b_i$ and $a_i$ are colored red and the remaining entries black. In all boxes, $F(\beta(\alpha))=``W^i|X|Y|Z^i"$ and $F(\alpha)=``W^i|Y|X|Z^i"$ have $X$ and $Y$ colored blue and red, respectively, while $W^i$ and $Z^i$ are left black. In addition, the edge leading from $\beta(\alpha)$ to $\alpha$ is labeled with its subindex $i$.
\end{example}

\subsection{Bitstring forms out of nested castling}\label{alfalfa}

For each $k$-germ $\alpha$ ($k>1$), let us define the bitstring form $f(\alpha)$ of $F(\alpha)$ by replacing each number entry of $F(\alpha)$ by a 0-bit and each ``$=$" sign by a 1-bit. (0-bits and 1-bits here correspond respectively to the 1-bits and 0-bits of \cite{Hcs}). Such $f(\alpha)$ is an $n$-bitstring of weight $k$ whose support $supp(f(\alpha))$ is in $V(O_k)$. So, we consider both
$F(\alpha)$ and the characteristic vector $f(\alpha)$ of $supp(f(\alpha))$ to represent the vertex $supp(f(\alpha))$ of $O_k$.

\begin{example}\label{PLC}
We can recover $F(\alpha)$ from $f(\alpha)$, exemplified for $k=1,2,3$ in Fig.~\ref{fig2}. In it, for each one of the $1+2+5=8$ cases in the figure,
a piecewise-linear curve $PLC(\alpha)$ is constructed iteratively that starts at the shown origin O in the Cartesian plane $\Pi$ by
replacing successively the 0-bits and 1-bits of $f(\alpha)$ by {\it up-steps} and {\it down-steps}, namely diagonal segments $(x,y)(x+1,y+1)$ and $(x,y)(x+1,y-1)$, respectively. To each down-step of $PLC(\alpha)$, we assign the ``$=$"- sign.
 We assign the integers in the interval $[0,k]$ in decreasing order (from $k$ to 0) to the up-steps of $PLC(\alpha)$, from the top unit layer of $PLC(\alpha)$ in $\Pi$ to the bottom one and from left to right at each pertaining unit layer between contiguous lines $y,y+1\in\mathbb{Z}$.
Then, by reading and successively writing the number entries and ``$=$" signs assigned to the steps of $PLC(\alpha)$, the $n$-tuple $F(\alpha)$ is obtained.
Fig.~\ref{fig2} is provided, underneath each instance, with the corresponding $k$-germ $\alpha$ followed by  $F(\alpha)$ and its (underlined) order of presentation via Theorem~\ref{thm1}. We assume that all elements of $V(O_k)$ are represented by means of such piecewise-linear curves, for each fixed integer $k>0$.

Theorem~\ref{thm1} is exemplified in Fig.~\ref{fig2} too, where {\it $i$-nested castling} is occurring via {\it layer polygons} (either isosceles trapezoids or triangles) with their interiors pairwise disjoint, as follows. For $k=2$: between the  blue layer polygon (delimited by the up-step ``1'' on the left and the down-step ``='' on the right)  and the yellow layer polygon (delimited by the up-step ``2'' on the left and the down-step ``='' on the right). For $k=3$: between the blue, green and yellow layer polygons (delimited on the left by the up-steps ``1'', ``2'' and ``3'', respectively, and corresponding down-steps ``='' on their right).

Specifically in Fig~\ref{fig2}: For $k=2$, the 1-nested castling from the root 2-RGS (0) to the 2-RGS (1) is depicted as a permutation of the contiguously labelled up-steps of the (possibly shortened) layer polygons. For $k=3$, the 1- and 2-nested castlings from the root RGS (00) to the RGS's (01) and (10), respectively, permute the order of the contiguously labelled up-steps of the (possibly shortened) layer polygons as indicated in the figure. Similarly for the 1-nested castlings from (10) to (11) and from (11) to (12).
\end{example}

\subsection{Dyck paths}\label{Dyck}

\begin{figure}[htp]
\includegraphics[scale=0.73]{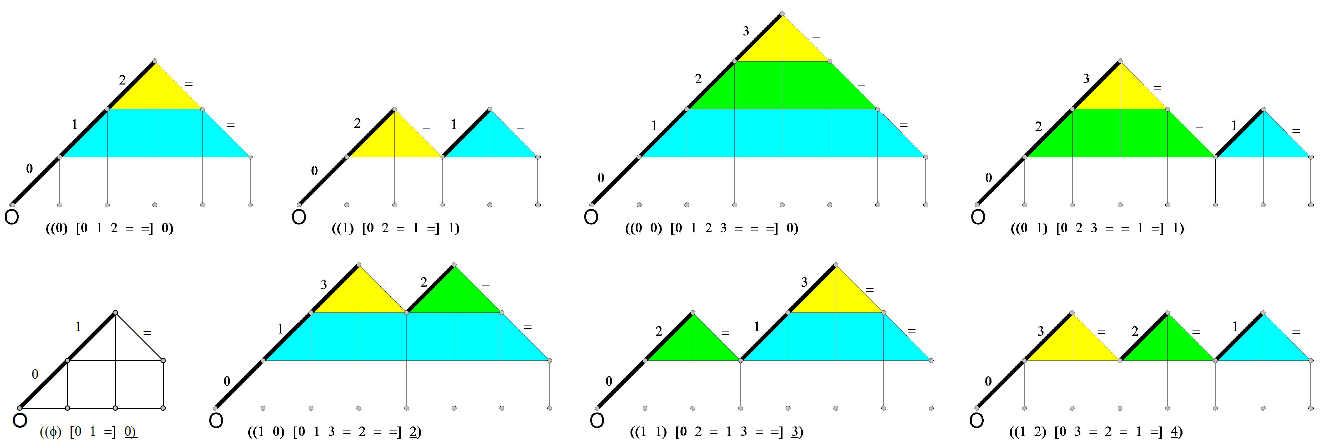}
\caption{Recovering $F(\alpha)$ from $f(\alpha)$: $PLC(\alpha)$ for triples (($\alpha)$ $[F(\alpha)]$, $\underline{ord(\alpha)})$, $k=1,2,3$}
\label{fig2}
\end{figure}

Let $0<k\in\mathbb{Z}$ and let $\alpha$ be a $k$-germ. The curve $PLC(\alpha)$ (Example~\ref{PLC} and Fig.~\ref{fig2}) yields a {\it Dyck path} $DP(\alpha)$ via the removal of its first up-step $(0,0)(1,1)$ and a change of coordinates from $(1,1)$ to $(0,0)$. Such Dyck path $DP(\alpha)$ represents a corresponding {\it Dyck word} $DW(\alpha)=``0\cdots 1"$ of length $2k$, a particular case for $\ell=k$ of a {\it Dick word of length} $2\ell$ ($0<\ell\in\mathbb{Z}$), defined as a $2\ell$-bitstring
of weight $\ell$ such that in every prefix the number of 0-bits is at least the number of 1-bits
 (differing from the Dyck words of \cite{Hcs}, in which, on the contrary, the number of 1-bits is at least equal to the number of 0-bits).
 The concept of {\it empty Dyck word} $\epsilon$ also makes sense here and is used for example in Section~\ref{verL}, display~(\ref{!}).
The Dyck paths $DP(\alpha)$ corresponding to the curves $PLC(\alpha)$ in Fig.~\ref{fig2} are represented in the lower-left quarter of Fig.~\ref{fig3}, with notation specified in Examples~\ref{rr2} and \ref{rrr2}, and preserving the colors of Fig.~\ref{fig2}. In Subsection~\ref{uu}, the down-steps on the right of the layer polygons of Fig~\ref{fig2} will have their labels ``='' changed to ``$\underline{j}$'', if ``$j$'' is the label of the associated up-step. This takes $F(\alpha)$ into an $n$-string $\underline{F}(\alpha)$, whose substrings $[j,\underline{j}]$ project in $f(\alpha)$ as Dyck subwords.

\begin{theorem}\label{bij}
There exists a bijection $\lambda$ from the $\mathbb{Z}_n$-classes of $V(O_k)$ onto the Dyck words of length $2k$.
In fact, each $\mathbb{Z}_n$-class $\Gamma$ of $V(O_k)$ has a Dyck word $f(\alpha)$ of length $2k$ as sole representative. The other $n$-tuples in $\Gamma$ are obtained by translations $f(\alpha).j$ mod $n$ of $f(\alpha)$, where $j\in[0.2k]$ is the position of the null entry in $f(\alpha).j$.
Also, $f(\alpha)$ may be interpreted as its corresponding $F(\alpha)$ and the other $n$-tuples $f(\alpha).j$ above may be interpreted as the corresponding translations $F(\alpha).j$ mod $n$.
\end{theorem}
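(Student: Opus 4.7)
The plan is a counting-plus-cycle-lemma argument in three steps, followed by matching the canonical representative against $f(\alpha)$. First I would count: the vertices of $O_k$ are the $\binom{n}{k}$ bitstrings of length $n=2k+1$ and weight $k$, and since $\gcd(k,n)=1$, any rotation-invariant bitstring of these dimensions must have trivial stabiliser (a nontrivial stabiliser of order $d\mid n$ would force periodicity of period $n/d$ and hence $d\mid k$ too). So every $\mathbb{Z}_n$-orbit has size $n$ and the number of orbits is $\binom{2k+1}{k}/(2k+1)=(2k)!/[k!(k+1)!]=C_k$, matching the number of Dyck words of length $2k$.

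Second, I would produce a canonical representative of each class via the Dvoretzky--Motzkin cycle lemma. Reading 0-bits as $+1$ and 1-bits as $-1$ turns a weight-$k$ $n$-bitstring into a lattice walk of net displacement $+1$; the cycle lemma then asserts that exactly one of its $n$ cyclic rotations $w^*$ has strictly positive partial sums throughout. The leading bit of $w^*$ is necessarily a 0-bit, and deleting it yields a length-$2k$ bitstring of weight $k$ each of whose prefixes has at least as many 0-bits as 1-bits, i.e., a Dyck word of length $2k$. Conversely, prepending a 0-bit to any length-$2k$ Dyck word and taking the $\mathbb{Z}_n$-orbit recovers a class. This produces a bijection $\lambda$ from classes onto length-$2k$ Dyck words, whose two sides have matching cardinality $C_k$ by the first step.

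Third, I would check that $f(\alpha)$ coincides with $w^*$ for the class containing $supp(f(\alpha))$. The base case is direct: $F(0^{k-1})=``012\cdots k=\cdots="$ gives $f(0^{k-1})=0^{k+1}1^k$, clearly the cycle-lemma representative of its class. For the general case the cleanest route is to invoke Example~\ref{PLC} and Remark~\ref{Dyck}: by that construction $PLC(\alpha)$ begins with the up-step $(0,0)(1,1)$ and its tail, after translating origin to $(1,1)$, is the Dyck path $DP(\alpha)$; hence $PLC(\alpha)$ stays at height $\geq 1$ for every $x\geq 1$, which is exactly the strict-positive-prefix condition on $f(\alpha)$ and identifies it with $w^*$. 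Distinct germs yield distinct $F(\alpha)$ by the deterministic recursion of Theorem~\ref{thm1}(B), hence distinct Dyck words; with cardinalities matching, the composite $\alpha\mapsto$ class is a bijection. Since each class has size $n$, the $n$ cyclic shifts $f(\alpha).j$ exhaust $\Gamma$, and labelling each shift by the position $j\in[0,2k]$ into which the leading (position-$0$) 0-bit of $f(\alpha)$ is carried gives the stated description; Remark~\ref{alfalfa} transfers this verbatim to the $F(\alpha).j$.

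The main obstacle is the identification in Step~3. If one tries to induct directly on ${\mathcal T}_k$ through the rewriting $F(\beta)=W^i|X|Y|Z^i\mapsto F(\alpha)=W^i|Y|X|Z^i$ of Theorem~\ref{thm1}(B), delicate excess-bookkeeping on $X$ and $Y$ is needed (using that $W^i$ is an ascending numeral $i$-substring, $Z^i$ all signs, $Y$ begins with the numeral $\gamma+1$, consecutive numerals in $F(\alpha)$ are strictly ascending, and ``$k=$'' is a substring but ``$=k$'' is not) to see that the swap $X\leftrightarrow Y$ preserves the strict-positive-prefix condition. Appealing instead to the geometric picture in Example~\ref{PLC}, which already encodes the Dyck property into $PLC(\alpha)$ by construction, bypasses this bookkeeping entirely and is the path I would follow.
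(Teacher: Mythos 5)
Your proposal is correct, and it is substantially more complete than the argument the paper actually gives. The paper's proof is a bare cardinality match: it observes that there are $C_k$ Dyck words of length $2k$ (in bijection with the strings $f(\alpha)$) and $C_k$ $\mathbb{Z}_n$-classes, and from this alone declares the bijection $\lambda(\Gamma)=f(\alpha)$. That leaves unproved precisely the clause ``each $\mathbb{Z}_n$-class has a Dyck word as \emph{sole} representative'': equal cardinalities do not by themselves rule out a class containing zero or two strings of the form $0w$ with $w$ a Dyck word. Your three steps supply exactly the missing content. Step~1 (the stabiliser computation from $\gcd(2k+1,k)=1$, giving free rotation action and the count $\binom{2k+1}{k}/(2k+1)=C_k$) is implicitly assumed by the paper but never stated. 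Step~2, the Dvoretzky--Motzkin cycle lemma applied to the $\pm1$ walk of net displacement $+1$, is the key structural fact that each orbit contains exactly one rotation with strictly positive partial sums, whose truncation is a Dyck word in the paper's convention (0-bits majorising 1-bits in every prefix); this is what makes $\lambda$ well defined and injective rather than merely a bijection by fiat. Step~3, identifying that canonical rotation with $f(\alpha)$, is legitimately discharged by appeal to Remark~\ref{Dyck} and Example~\ref{PLC}, since the paper itself asserts there that $PLC(\alpha)$ minus its initial up-step is a Dyck path; your remark that a direct induction through the $W^i|X|Y|Z^i\mapsto W^i|Y|X|Z^i$ rewriting of Theorem~\ref{thm1}(B) would need careful excess bookkeeping is accurate, and routing around it via the geometric picture is the sensible choice. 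In short: the paper's proof buys brevity at the cost of leaving the central uniqueness claim unjustified; yours proves the theorem as stated.
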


\begin{proof} The $n$-tuples $F(\alpha)$ were obtained via the $i$-nested castlings of Theorem~\ref{thm1} associated to the indices $i=i(\alpha)$ of the oriented edges $\beta\alpha$ of the tree ${\mathcal T}_k$ of Section~\ref{nat} from the parent $\beta$ of each non-root $k$-germ $\alpha$ to $\alpha$.
Note that there are just $C_k$ Dick words of length $2k$ (Subsection~\ref{Dyck}) corresponding bijectively to the $n$-tuples $F(\alpha)$, and to their binary versions $f(\alpha)$ (Subsection~\ref{alfalfa}). Also, there are exactly $C_k$ $\mathbb{Z}_n$-classes $\Gamma$ in $V(O_k)$. Then,
each $\mathbb{Z}_n$-class $\Gamma$ of $O_k$ contains a sole $f(\alpha)$, which correspond to a sole $F(\alpha)$ by the approach in Example~\ref{PLC}. As a result,
the correspondence from the $\mathbb{Z}_n$-classes $\Gamma$ onto such Dyck words is a bijection $\lambda$ with $\lambda(\Gamma)=f(\alpha)$, as claimed, and we may write $\Gamma=\Gamma_\alpha=\lambda^{-1}(f(\alpha))$.

\begin{figure}[htp]
\includegraphics[scale=0.73]{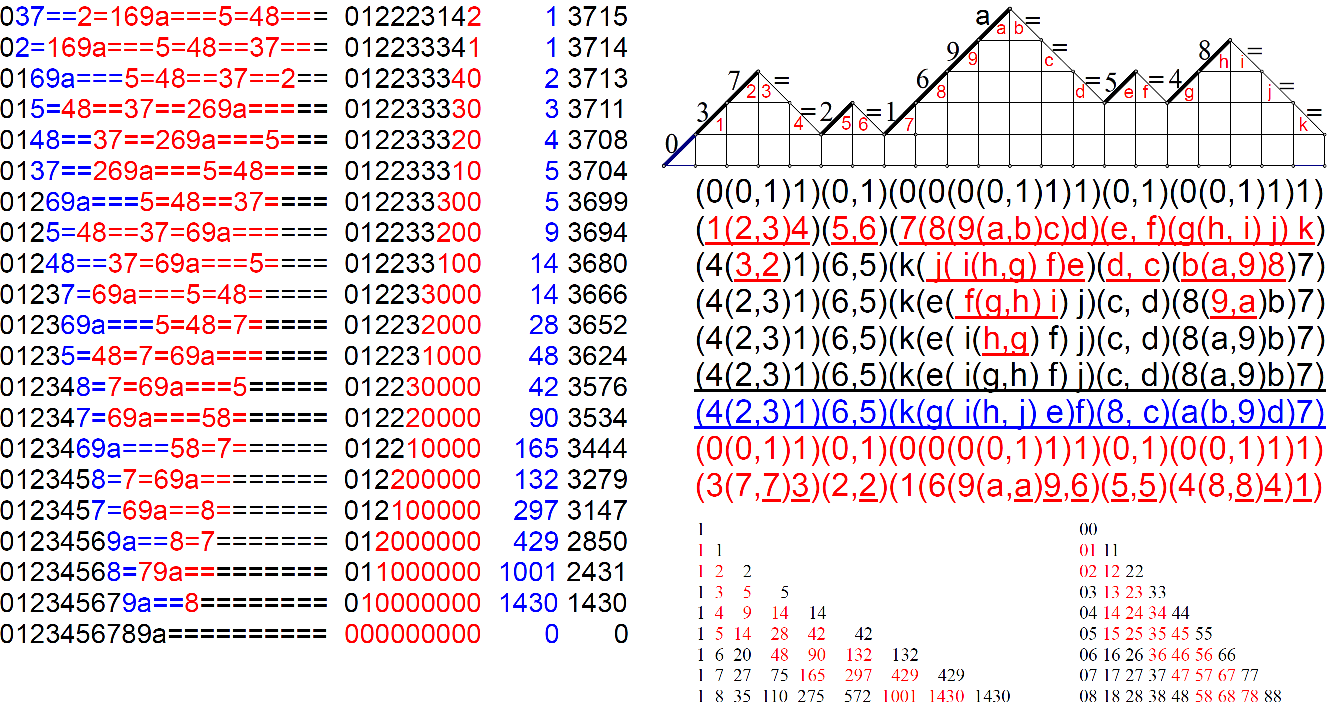}
\caption{Illustration for Examples~\ref{Toc} and~\ref{Inc} and Subsection~\ref{rev}}
\label{fig4}
\end{figure}

With respect to the last two sentences in the statement, note that $\mathbb{Z}_n$ acts on $O_k$ yielding, for each $k$-germ $\alpha$, the orbit of $f(\alpha)$, including the translations of $f(\alpha)$, namely:
$$f(\alpha)=f(\alpha).0=f_0f_1\cdots f_{2k},\hspace*{5mm}
f(\alpha).1=f_1f_2\cdots f_{2k}f_0,\hspace{5mm} f(\alpha).2=f_2f_3\cdots f_{2k}f_0f_1,\hspace{5mm}\ldots,$$ where $f_j=f_j(\alpha)\in\{0,1\}$ ($j\in[0,2k]$), with general term given by
$$f(\alpha).j=f_jf_{j+1}\cdots f_{2k}f_0f_1\cdots f_{j-1},$$ and ending up with $f(\alpha).2k=f_{2k}f_0f_1\cdots f_{2k-1}$.

Similar treatment holds by taking $F(\alpha)=F(\alpha).0=F_0F_1\cdots F_{2k}$,
where $F_j=``="$ if $f_j$ is a 1-bit and $F_j\in[0,k]$ if $f_j$ is a 0-bit (the value of $F_j$ provided by the said approach in Example~\ref{PLC}),
with general translation term given by $$F(\alpha).j=F_jF_{j+1}\cdots F_{2k}F_0F_1\cdots F_{j-1}.$$ This covers all the vertices of $\mathbb{Z}_n$-classes of $V(O_k)$, seen either from the $f(\alpha)$ point of view or from the $F(\alpha)$ point of view.
\end{proof}

\begin{example}\label{Toc} In this and subsequent examples, we express integers in their hexadecimal form (e.g., $a=10,b=11$, etc.). To clarify concepts, let us determine the $n$-germ $\alpha_1$ ($n=21$) corresponding to the bitstring $f(\alpha_1)=00110100001110100111$.
We proceed by determining $PLC(\alpha_1)$ (as indicated in Example~\ref{PLC}),  drawn in the upper-right of Fig.~\ref{fig4}, where the black hexadecimal number entries and ``$=$" signs form the $n$-string $F(\alpha_1)$, while the red symbols
are the first twenty positive hexadecimal numbers, (that appear in that order in the expression $h_0(\alpha)=h_0(\alpha_1)$
of Subsection~\ref{rev}, item 2).
To associate the $k$-germ $\alpha_1$ to the $n$-string $F(\alpha_1)$, we build a list $\mathbb{L}(\alpha_1)$ shown on the left of Fig.~\ref{fig4}.
The first lines of $\mathbb{L}(\alpha_1)$ contain data concerning the path $P(\alpha_1)$ from $\alpha_1$ to the root $0^{20}=\alpha_{21}$ in ${\mathcal T}_{21}$, namely: $F(\alpha_i)$, $\alpha_i$, $ord(\alpha_i)-ord(\alpha_{i+1})$ and $ord(\alpha_i)$, for $i=1,2,\ldots,20$. The first sublist $\mathbb{L}'(\alpha_1)$ in $\mathbb{L}(\alpha_1)$, composed successively by $F(\alpha_1),\ldots,F(\alpha_{21})$, shows each of the 21-strings $F(\alpha_j)$, ($j=1,\ldots,20$), as a concatenation $``W^{i_j}|X|Y|Z^{i_j}"$, where $i_j$ is the first index in $F(\alpha_j)=c_0c_1\cdots c_{20}$ such that $c_j>j$ with blue $X$, red $Y$, and black for both $W^{i_j}$ and $Z^{i_j}$, showing in the following line the 21-string
$F(\alpha_{j+1})=``W^{i_j}|Y|X|Z^{i_j}"$, just under $F(\alpha_j)$. To the right of $\mathbb{L}'(\alpha_1)$ and starting at the red $\alpha_{21}=0^{k-1}$ in line 21, we went up and built a sublist $\mathbb{L}''(\alpha_1)$ by reconstructing each $\alpha_j=a_{k-1}\cdots a_1$, setting in red the terminal substring $a_{i_j}\cdots a_1$ and in black the initial substring $a_{k-1}\cdots a_{{i_j}+1}$.
To the right of $\mathbb{L}''(\alpha_1)$, we constructed an accompanying blue sublist $\mathbb{L}'''(\alpha_1)$ formed by Catalan numbers taken as increments that determine the corresponding orders of the vertices in $P(\alpha_1)$. These orders, appearing in the final sublist $\mathbb{L}''''(\alpha_1)$, are obtained as the partial sums of Catalan numbers. This takes to $ord(\alpha_1)=3715<4862=|V({\mathcal T}_{21})|$. The blue sublist $\mathbb{L}'''(\alpha_1)$ arises from the red entries in the first nine lines of Catalan's triangle in the lower part of Fig.~\ref{fig4} with entries $\tau_i^j$ as in \cite[pp.~139--140]{D2} represented as pairs $ij$ to the right of the said nine lines,
($i\in[0,8]$, $j\in[1,8]$).
\end{example}

\section{Edge-supplementary arc factorizations}\label{modular}

Each arc $(u,v)$ of $O_k$ ($u,v\in V(O_k)$) is represented by translations
$F(\alpha_u).j_u$ and $F(\alpha_v).j_v$   mod $n$ of the $n$-strings $F(\alpha_u)$ and $F(\alpha_v)$.
Looking $u$ and $v$ upon as $u=F(\alpha_u).j_u$ and $v=F(\alpha_v).j_v$ and comparing, we see that apart from a specific number entry $i\in[0,2k]$ in both $u$ and $v$, all other number entries of one of them correspond to ``$=$" sign entries of the other one, and vice versa. Moreover, the entries $u_i$ of $u$ and $v_i$ of $v$ satisfy $u_i+v_i=k$, so they are said to be $k$-{\it supplementary}. Then, the {\it edge-supplementary 1-arc factorization} $\mathbb{A}_k$ of $O_k$ claimed in Section~\ref{s1} is given by the values of those entries $u_i$ and $v_i$ taken as colors of the arcs $(u,v)$ and $(v,u)$, respectively, for all pairs of adjacent vertices $u$ and $v$ of $O_k$.

\begin{figure}[htp]
\hspace*{0.5mm}
\includegraphics[scale=0.72]{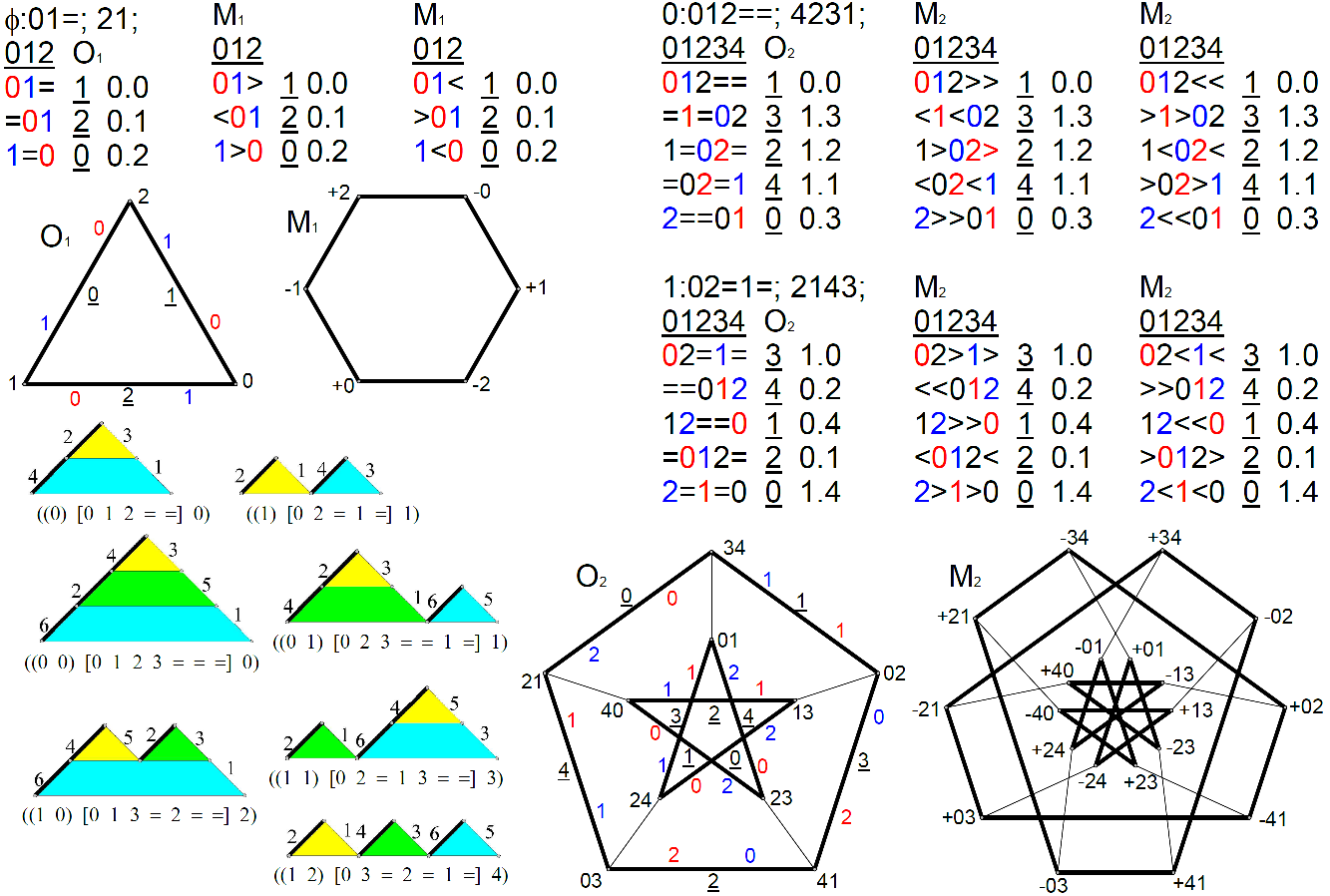}
\caption{Illustration for Section~\ref{modular} and Examples~\ref{rr2} and~\ref{rrr2}}
\label{fig3}
\end{figure}

\begin{example}\label{rr2}
Edge-supplementarity is illustrated for $k=1,2$ in Fig.~\ref{fig3}. In it, the $n$-tuples $F(\alpha)$ are shown as the initial lines of corresponding vertical lists $L(\alpha)$ in which arcs $(u,v)$ of $O_k$ appear as ordered pairs $(F(\alpha_u).j_u,F(\alpha_v).j_v)$ disposed on contiguous lines, except for all arcs from bottom lines, taken as $n$-tuples $F(\alpha_u).j_u$, to corresponding top lines, taken as associated $n$-tuples $F(\alpha_v).0$, thus closing the lists $L(\alpha)$ into oriented cycles $C(\alpha)$. In each such pair $(F(\alpha_u).j_u,F(\alpha_v).j_v)$, the $i$-entries $u_i$ and $v_i$ are colored respectively blue in $F(\alpha_u).j_u$ and red in $F(\alpha_v).j_v$ (the other entries in black) with the exception of the bottom and top $n$-tuples in each list: these are also adjacent, with the entry $u_i=u_0$ holding blue value $k$ on the bottom $n$-tuple $u$ and the entry $v_i=v_0$ holding red value 0 on the top $n$-tuple $v=F(\alpha_v)=F(\alpha_v)_0$. The position $i$ of the blue entry $u_i$ in each line of the lists is cited underlined ($``\underline{i}"$) to the right of its $n$-tuple $u$; the vertex $u\in O_k$ represented in such a line is still cited to the right of its $``\underline{i}"$ as $``ord(\alpha).j_u"$, where
$ord(\alpha)$ refers to the $\mathbb{Z}_n$-class $\Gamma_\alpha$ (so denoted in the proof of Theorem~\ref{bij}) of $u$ in $O_k$. Such vertical lists are used in
Section~\ref{verL} (Fig.~\ref{fig6},~\ref{fig5},~\ref{fig7}) in order to yield Hamilton cycles of $O_k$, for $k>2$, as in \cite{u2f,Hcs}; ($k=2$ is excluded; indeed, $O_2$ is the hypohamiltonian Petersen graph).
\end{example}

\subsection{String reversals in properly nested parentheses}\label{rev}

{\bf Assignment of a $2k$-permutation $\pi(\alpha)$ to each $k$-germ $\alpha$.}

\noindent Consider the Dyck path $DP(\alpha)$ obtained from $PLC(\alpha)$ by the removal of its first up-step and subsequent change of coordinates from $(1,1)$ to $(0,0)$.

\begin{enumerate}\item
Let $f'$ (resp., $F'$) be the $2k$-string obtained from $f$ (resp. $F$) by removing its first entry.
Set parentheses or commas between each two entries of $f'$, so that the four substrings
$$\begin{array}{rrrrrrrrl}
 ``01"&,&``10"&,&``00"&\mbox{and}&``11"&&\mbox{ are transformed into the substrings}\\
 ``0,1"&,&``1)(0"&,&``0(0"&\mbox{and}&``1)1"&,&\mbox{ respectively, thus yielding a string }f''.
 \end{array}$$
Add a terminal parenthesis to $f''$, so that the last ``$1$" in $f''$ is transformed into ``$1)$". Denote by $g$ the string resulting from such addition of a closing parenthesis to $f''$.
\item
By proceeding from left to right, replace the bits of $g$ by the successive integers from 1 to $|g|$, keeping all pre-inserted parentheses and commas in $g$ unchanged in  position. This yields a version $h_0$ of $g$ from which removal of parenthesis and commas yields $F'$.
 \item
 Present $h_0$ as a concatenation $(w_1)|(w_2)|\cdots|(w_t)$ of expressions $(w_i)$, ($i=1,\ldots,t$), for adequate $t\ge 1$, each $(w_i)$ with terminal ``)" being the closing ``)" nearest to its opening  ``(".  Let $w'_i$ be the number string obtained from $w_i$ by the removal of all its parentheses and commas. For $i=1,\ldots,t$,
perform a recursive step ${\mathbf{•}cal R}$ consisting first in transforming $w'_i$ into its reverse substring $w''_i$ and then resetting $w''_i$ instead of $w'_i$ in $(w_i)$, with the parentheses and commas of $(w_i)$ kept unchanged. Denote the resulting expression by ${\mathcal R}(w_i)$.
Ortherwise, let ${\mathcal R}^2=$. This yields a string $h_1={\mathcal R}(w_1)|{\mathcal R}(w_2)|\cdots|{\mathcal R}(w_t).$

\item For $i\in[1,t]$, let ${\mathcal R}(w_i)=(a_{i,1}^1\eta_{i,1}^1b_{i,1}^1)|(a_{i,2}^1\eta_{i,2}^1b_{i,2}^1)|(a_{i,3}^1\cdots)|(\cdots b_{i,t_i-1}^1)|(a_{i,t_i}^1\eta_{i,t_i}^1b_{i,t_i}^1)$, each $\eta_{i,j}^1\ne\epsilon$ ($\epsilon$ representing a comma ``," in $h_1$, so $b_{i,j}^1=a_{i,j}^1\pm 1$, if $\eta_{i,j}^1=\epsilon$) of the form $\eta_{i,j}^1=(w_{i,j})$,
with terminal ``)" being the closing ``)" nearest to its opening  ``(", for $j\in[1,t_i]$. Apply item 3 to each $(w_{i,j}^1)$ in place of $h_0$, for $j\in[1,t_i]$. Replace the resulting strings ${\mathcal R}(w_{i,j})$ in the positions of the corresponding $(w_{i,j})$ in ${\mathcal R}(w_i)$, yielding a modified version ${\mathcal R}^2(w_i)$ of ${\mathcal R}(w_i)$. Let $h_2={\mathcal R}^2(w_1)|{\mathcal R}^2(w_2)|\cdots|{\mathcal R}^2(w_t)$.
\item
Each ${\mathcal R}(w_{i,j})$ is a concatenation of terms of the form $a^2_I\eta^2_Ib^2_I$, with $I=\{i,j_1,j_2\}$ by letting $j_1=j$. In each such concatenation, the strings $\eta^2_I\ne\epsilon$ are of the form $(w_I)$ and must be treated like $(w_{i,j})$ is in item 4 (or $(w_i)$ in item 3), producing a modified string ${\mathcal R}(w_I)$.
Proceeding this way,
a sequence of strings $(h_0,h_1,h_2,\ldots,h_{s+1})$ is eventually obtained for some $s\ge 0$ when all  innermost expressions of the form $(w_I)=(a,a\pm 1)$ with $a,a\pm 1\in[1,2k]$) are finally processed, where $I={i,j_1,\ldots,j_s}$.
\item
 Disregarding the parentheses and commas in $h_{s+1}$ yields a $2k$-string $g'$ that insures an assignment $i\rightarrow p(i)$, for $i=1,\ldots,2k$, by making correspond the positions $i=1,\ldots,2k$ of $g'$ to the actual entry values in those positions of $g'$.
 \item
Define $\pi=p^{-1}$, the inverse $2k$-permutation of $p=(p(1)p(2)\cdots p(2k))$ \cite{Hcs}.
\end{enumerate}

\begin{example}\label{Inc} {\bf(Continuation of Example~\ref{Toc})} The middle right of Fig.~\ref{fig4} (just under the upper-right representation of the curve $PLC(\alpha_1)$) contains a list, call it $\ell$, whose first line represents $g(\alpha_1)$ (Subsection~\ref{rev}, item 1), with $\alpha_1$ as in Example~\ref{Toc}, and whose second line represents $h_0(\alpha_1)$ (Subsection~\ref{rev}, item 2), in an hexadecimal-notation continuation. In this representation of $h_0(\alpha_1)$, the red substrings $w_1=``1(2,3)4''$, $w_2=``5,6''$ and $w_3=``7(8(9(a,b)c)d)(e,f)(g,h,i)j)k''$ are to be reversed according to the first instances of ${\mathcal R}(w)$ in Subsection~\ref{rev}. This yields the third line, representing $h_1(\alpha_1)$ in the list $\ell$. In $h_1(\alpha_1)$, the red substrings are to be reversed according to the next instances of ${\mathcal R}(w)$, and so on. In the end, the sixth line of $\ell$, represents $h_4(\alpha_1)=$ $$p(\alpha_1)=(4,3,2,1,6,5,20,14,18,16,17,15,19,12,13,8,10,9,11,7).$$
The inverse of this is $\pi(\alpha_1)=$ $$p^{-1}(\alpha_1)=(4,3,2,1,6,5,20,16,18,17,19,14,15,8,12,10,11,9,13,7),$$ represented as a blue string under the mentioned sixth line $h_4(\alpha_1)=p(\alpha_1)$ and in a similar format with inserted parentheses and commas.
\end{example}

\begin{example}\label{rrr2} {\bf(Continuation of Example~\ref{rr2})} Fig.~\ref{fig3} contains one oriented 3-cycle for $O_1$ and two oriented 5-cycles for $O_2$. Their lists $L(\alpha)$ are headed by two lines: a first line reading ``$ord(\alpha)$:$F(\alpha);\pi(\alpha)$", with ``$F(\alpha)$'' as the first line of the cycle and ``$\pi(\alpha)$" (as in Subsection~\ref{rev}), formed by the different entries at which a blue-to-red $k$-supplementation takes place in the cycle; the second line contains the (underlined) positions 0 to $2k$ of the vertices (as $n$-tuples) in the cycle, followed by ``$O_k$". The arcs of $O_k$ receive colors in the set $[0,k]$ so that the edge between each two adjacent vertices in those cycles has its two composing arcs bearing $k$-{\it supplementary} colors $b$ (for blue) and $r$ (for red), meaning that $b,r\in[0,k]$ are such that $b+r=k$.
To the immediate right of each of these three cycles, for lists $L(\epsilon),L(0),L(1)$ of respective lengths 3, 5, 5, are also represented vertical lists $L^M(\epsilon), L^M(0), L^M(1)$, (occupying two contiguous columns each) closing into corresponding cycles $C^M(\epsilon), C^M(0), C^M(1)$ of respective double lengths 6, 10, 10, obtained by replacing the ``$=$" signs by the ``$>$" signs  and ``$<$" signs  uniformly on alternate lines.
These cycles can be interpreted as cycles in the middle levels graphs $M_1,M_2$, obtained by reading the subsequent lines in the concatenation of two subsequent columns as follows: from left to right if they bear ``$>$'' signs, and from right to left if they bear ``$<$'' signs. In addition, the graphs $O_1$, $O_2$, $M_1$, $M_2$ are represented in Fig.~\ref{fig3} in thick trace for the edges containing the arcs of the oriented cycles $C(\alpha)$; recalling $\mathbb{A}_k$ from Section~\ref{s1}, each
vertex (resp., edge) of $O_1$, $O_2$ is denoted by the support of its corresponding bitstring $f(\alpha)$
(resp., denoted centrally by its underlined color in $\mathcal{E}_k$ and marginally by its blue-red arc-color pair in $\mathbb{A}_k$). In $M_1$, $M_2$, a plus or minus sign precedes each such support indicating respectively a vertex in level $L_k$ or in level $L_{k+1}$ of $B_n$; if in $L_{k+1}$, as the complement $\overline{f(\alpha,<)}$ of the right-to-left reading $f(\alpha,<)$ of the bitstring $f(\alpha)=f(\alpha,>)$; if in $L_k$, as $f(\alpha)=f(\alpha,>)$ itself. The resulting readings of $n$-tuples of $M_1$, $M_2$ inherit the mentioned arc colors for $O_1$, $O_2$, corresponding to the {\it modular matchings} of \cite{DKS}, only that the colors in \cite{DKS} are in $[1,k+1]$ with supplementary sum $k+1$ while our colors are in $[0,k]$ with supplementary sum $k$.\end{example}

\section{Uniform 2-factors and Hamilton cycles}\label{verL}

Let $k>1$. A vertical list $L(\alpha)$ as in Examples~\ref{rr2} and \ref{rrr2}, illustrated in Fig~\ref{fig3}, can be formed for each $k$-germ $\alpha$.
In fact, there are $C_k$ such lists $L(\alpha)=(L_0(\alpha),L_1(\alpha),\ldots,L_{2k}(\alpha))^t$, where $t$ stands for transpose, each $L(\alpha)$ representing in $O_k$ an oriented $n$-path $P(\alpha)$ whose end-vertices $L_0(\alpha)=F(\alpha)$ and $L_{2k}(\alpha)$, are adjacent in $O_k$, thus completing an oriented cycle $C(\alpha)$ in $O_k$ by the addition of the arc $(L_{2k}(\alpha),L_0(\alpha))$.

\begin{figure}[htp]
\includegraphics[scale=0.75]{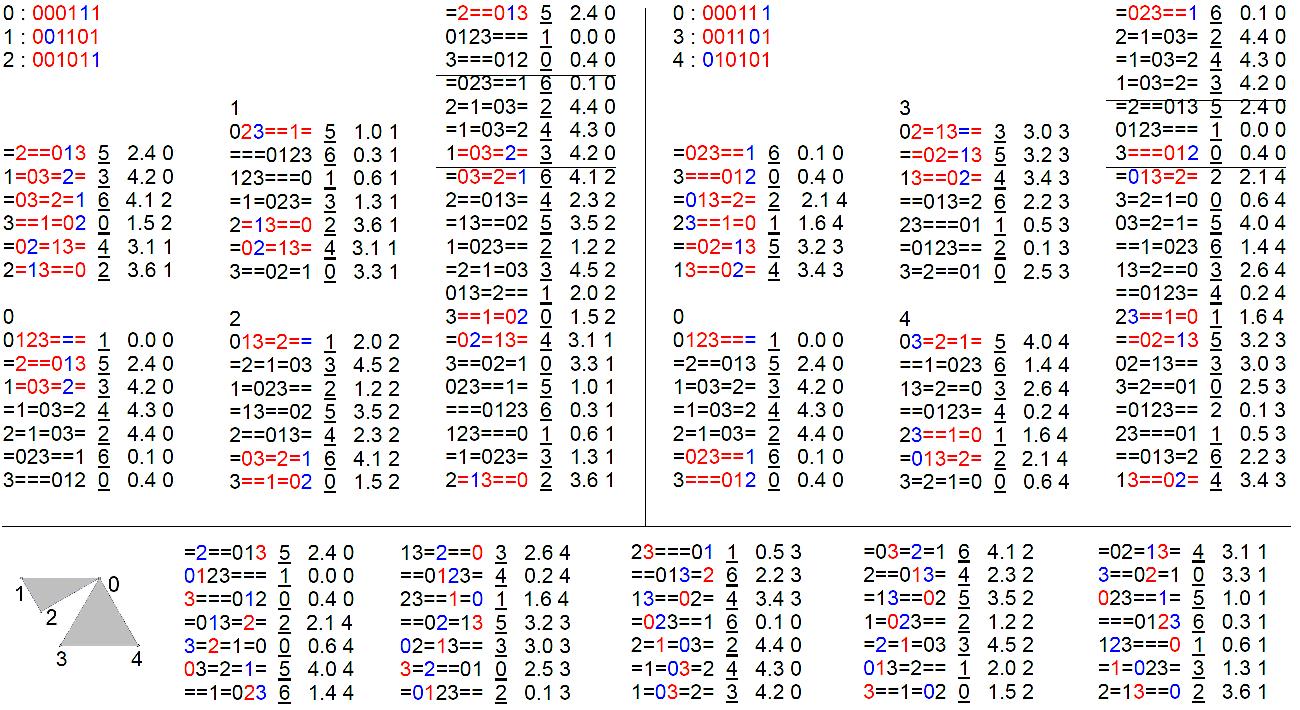}
\caption{Illustration for Section~\ref{verL} and Example~\ref{ex6}}
\label{fig6}
\end{figure}

Those paths $P(\alpha)$ arose in \cite[Theorem 4]{u2f} and \cite[Lemma 4]{Hcs}, in the latter case leading to Hamilton cycles in $O_k$ and $M_k$. Construction of these $L(\alpha)$ is controlled by the $2k$-permutation $\pi(\alpha)$ assigned to $\alpha$ via the procedure contained in items 1--7 of Subsection~\ref{rev}, as will be established in Theorem~\ref{L5}.

\subsection{Flippable tuples and flipping cycles}\label{joder}

Fig.~\ref{fig6} for $k=3$ and Fig.~\ref{fig7} for $k=4$ (Example~\ref{ex7}) contain the lists $L(\alpha)$ assembled in triples and/or quadruples.
For $k=3$, Fig.~\ref{fig6} shows two such triples, that we call $\tau_0=(L(\alpha_0),L(\alpha_1),L(\alpha_2))$  on the upper-left of the figure and $\tau_1=(L(\alpha_0),L(\alpha_3),L(\alpha_4))$ on the upper-right, where each list $L(\alpha_i)$ is distinguished on its upper-left corner with the subindex $i$ of its $k$-germ $\alpha_i$. Two concepts from \cite{Hcs} are used here:

\begin{figure}[htp]
\hspace*{1.1cm}
\includegraphics[scale=1.05]{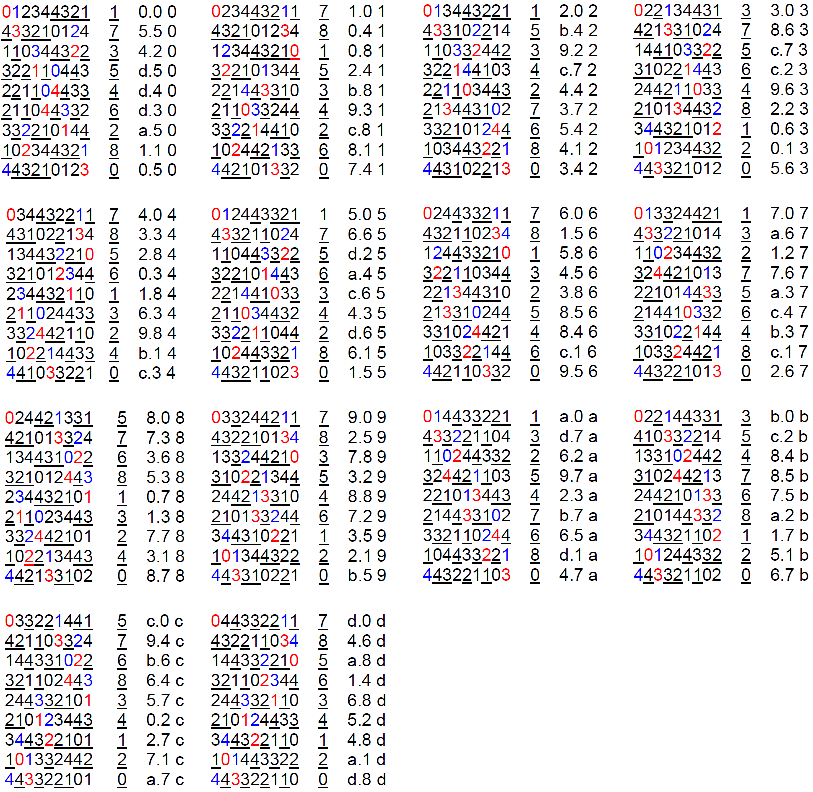}
\caption{Illustration for Subsection~\ref{uu} and Example~\ref{ex7}}
\label{fig5}
\end{figure}

\begin{enumerate}\item
{\bf Flippable tuples:} In each such $L(\alpha_i)$, there is at least one pair of contiguous red lines, apart from, or including, its first red line, $F(\alpha_i)$, except for their initial black entries and the unique vertical pair of number  $k$-supplementary blue entries (Section~\ref{modular}).
 These unique colored-line pairs $FT(\alpha_i,j)$, where $j\in[0,2k]$ are the respective positions counted from the right at which the $k$-supplementary pairs determining adjacency in $O_k$ occurs,  are said to be {\it flippable tuples} \cite{Hcs}.
 \item
{\bf Flipping $\kappa$-cycles:} For $j=0,1$, the three pairs $FT(\alpha_i,j)$ with $L(\alpha_i)\in\tau_j$ are combined into a 6-cycle $F_6C(\tau_j)$ in $O_3$, which for $\kappa=2k=6$, is an example of a {\it flipping $\kappa$-cycle} \cite{Hcs}, that we denote $F_\kappa C(\tau_j)$. Such flipping 6-cycle $F_6C(\tau_j)$ is shown in the middle left of the respective upper-left part and upper-right part, respectively, in
 Fig.~\ref{fig6}, sided each on its right and below by its three participating lists $L(\alpha_i)$.
 Above such flipping 6-cycle $F_6C(\tau_j)$ ($j=0,1)$, a triple of Dyck words of length 6 headed each by the subindex $i\in\{0,1,2\}$ or $i\in\{0,3,4\}$ of the corresponding $L(\alpha_i)$ is shown in red except for one blue entry at the position of the blue $k$-supplementary number entries in the two contiguous red-blue substrings of the corresponding flippable tuples
$FT(\alpha_i,j)$.
\end{enumerate}
For any $k>1$, red-blue flippable tuples as those six in this subsection (see Fig~\ref{fig6}) were shown to exist in \cite[pp.~1261--1265]{Hcs}. These six flippable tuples were shown to form part of a bitstring family \cite[display~(3.3)]{Hcs} (see displays~(\ref{!}) and~(\ref{!!}) in Subsection~\ref{r4} below). They were used in the construction of Hamilton cycles in \cite{Hcs}, reconsidered below.

\begin{example}
For $k=3$, Fig.~\ref{fig6} contains, on the right of each of the two cases of $\tau_j$ in Subsection~\ref{joder}, the symmetric difference of the corresponding flipping 6-cycle $F_6C(\tau_j)$ and the union of the three 7-cycles $C(\alpha_i)$, for each $i\in(0,1,2)$ or $i\in(0,3,4)$, yielding a 21-cycle in each case.
The two 21-cycles are then recombined into a Hamilton cycle of $O_3$, shown on the lower part of Fig.~\ref{fig6} as a list sectioned from left to right into five sublists. To the left of these five sublists, there is a drawing of an hypergraph as defined in Subsection~\ref{r4} below. \end{example}

\subsection{Modified n-tuples}\label{uu}

Each $n$-tuple $F(\alpha)$ gives place to a modified $n$-tuple $\underline{F}(\alpha)$ formed by the number entries $j\in[0,k]$ of $F(\alpha)$ set in the same positions they have in $F(\alpha)$ together with $k$ underlined number entries $\underline{j}$ in place of the ``$=$" signs, where $j\in[1,k]$ (or $\underline{j}\in\{\underline{1},\ldots,\underline{k}\}$), in a fashion determined by the fact that a nonempty Dyck word is expressible uniquely as a string $0u1v=0_u^vu1_u^vv$ (modified from $1u0v$ \cite[p.~1260]{Hcs}), where $u$ and $v$ are (possibly empty) Dyck words.
Each number entry $j\in[0,k]$ in $F(\alpha)$ corresponds to the starting entry $0_u^v$ of a Dyck word $0_u^vu1_u^vv$ in $f(\alpha)$, with its $1_u^v$ represented in $F(\alpha)$ by an ``$=$" sign. Its $\underline{F}(\alpha)$ has each number entry $j$ $(\ne\underline{j})$ in its same position as in $F(\alpha)$, with a corresponding entry $0_u^v$ of a Dyck word $0_u^vu1_u^vv$ in $f(\alpha)$.

Moreover, $\underline{F}(\alpha)$ has each ``$=$" sign of $F(\alpha)$ replaced by a corresponding underlined integer $\underline{j}$ in the position of an accompanying $1_u^v$. As an example, the right side of Fig.~\ref{fig4} contains, under the list $\ell$ of Example~\ref{Inc} and the blue string containing $\pi(\alpha_1)=p^{-1}(\alpha_1)$, a red line repeating the first line $g(\alpha_1)$ of $\ell$, and a subsequent red line with the 0-bits and 1-bits of $g(\alpha_1)$ replaced by the respective number entries $j$ and $\underline{j}$ of $\underline{F}(\alpha_1)$.

For $k=4$, Fig.~\ref{fig5} contains vertical lists $\underline{L}(\alpha)=(\underline{L}_0(\alpha),\underline{L}_1(\alpha),\ldots,\underline{L}_{2k}(\alpha))^T$ similar to the lists $L(\alpha)$ but corresponding instead to the $n$-strings $\underline{F}(\alpha)=\underline{L}_0(\alpha)$, $\underline{L}_1(\alpha)$, $\ldots$, $\underline{L}_{2k}(\alpha)$, where $\alpha$ runs over the total of fourteen $4$-germs and the only non-black entries are those corresponding to the 4-supplementary vertical blue-red pairs realizing the adjacency of each pair of contiguous lines, including the pair formed by the initial blue ``4" in the last line $\underline{L}_8(\alpha)$ and the initial red 0 in the first line $\underline{L}_0(\alpha)$ in each list. All the first columns of the fourteen lists form the same column vector, with transpose row vector $(0,\underline{4},1,\underline{3},2,\underline{2},3,\underline{1},4)$.

The sole representative $f(\alpha)$ of a $\mathbb{Z}_n$-class of $V(O_k)$, as in Theorem~\ref{bij}, may not only be interpreted as the $n$-tuple $F(\alpha)$ but also as the corresponding $\underline{F}(\alpha)$, so the other $n$-tuples of that class may be interpreted as its translations mod $n$. The lines of each $L(\alpha)$ and the lines of its associated $\underline{L}(\alpha)$ are translations mod $n$ of respective $n$-tuples $F(\alpha_\iota)$ and $\underline{F}(\alpha_\iota)$ that depend on the orders $\iota\in[0,2k]$ of such lines. These facts are used in the statement of Theorem~\ref{L5}, where the subindex $j$ is $j=2k-\iota$ in relation to the subindex $\iota$.

\section{Iterative generation of modified n-tuples}

\begin{theorem}\label{L5} For each $k$-germ $\alpha$:
\begin{enumerate}
\item[\bf(i)] $L(\alpha)$ is generated by transforming iteratively
for $j=2k,2k-1,\ldots,2,1$ and with initial $n$-tuple $L_0(\alpha)=F(\alpha)$ the $n$-tuple $L_{2k-j}(\alpha)$ into the uniquely feasible next $n$-tuple, $L_{2k-j+1}(\alpha)$, via $k$-supplementation of its $\pi(j)$-th entry and exchange of its $k$ remaining number entries by its $k$ ``$=$" sign entries;
\item[\bf(ii)] the first column of $\underline{L}(\alpha)$ has transpose row vector  $$(0,\underline{k},1,\underline{k-1},2,k-2,\cdots,\underline{3},k-2,\underline{2},k-1,\underline{1},k),$$ obtained by alternating the entries of the vectors $$(0,1,2,\ldots,k-1,k)\mbox{   and   }(\underline{k},\underline{k-1},\ldots,\underline{2},\underline{1});$$ moreover, $k\underline{k}$ and $\underline{1}0$ are substrings$\mod{n}$ of each $\underline{L}_j(\alpha)$.
\end{enumerate} The resulting lists $L(\alpha)$ and $\underline{L}(\alpha)$, yield a uniform 2-factor of $O_k$ formed by $C_k$ $n$-cycles.
\end{theorem}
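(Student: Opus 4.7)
The plan is to prove (i) by interpreting the recursion of Remark~\ref{r3} as the right-to-left instruction list for the arcs along $P(\alpha)$, prove (ii) by restricting the iteration of (i) to the first coordinate and using the matching convention of Remark~\ref{uu}, and conclude the uniform 2-factor claim by a counting argument.

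For part (i), the starting point is Remark~\ref{r2}: any arc of $O_k$ corresponds to selecting a single position $i\in[0,2k]$ at which to $k$-supplement the entry, while simultaneously exchanging all $k$ remaining number entries with the $k$ ``$=$''-sign entries. Hence between $L_{2k-j}(\alpha)$ and $L_{2k-j+1}(\alpha)$ the only degree of freedom is the choice of supplementation position, and the content of (i) reduces to identifying this position as $\pi(j)$. I would proceed by induction on the nesting depth of the parenthesization $g(\alpha)$ of Remark~\ref{r3}. At each depth level, one application of ${\mathcal RS}$ reverses a single TPP $(w)$, which corresponds precisely to the consecutive supplementations realizing a flippable tuple along $P(\alpha)$ in the sense of Example~\ref{joder} and \cite[Section~3]{Hcs}. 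Passing from $p(\alpha)$ to its inverse $\pi(\alpha)=p^{-1}(\alpha)$ turns the depth-first reversal order recorded by $p$ into the left-to-right order in which supplementations occur along $P(\alpha)$, which, when read from right to left as prescribed at the end of Remark~\ref{r3}, gives exactly the iteration $j=2k,2k-1,\ldots,1$. Uniqueness of the next $n$-tuple is then automatic: any other choice of supplementation position would either revisit a previously produced vertex or violate the cyclic invariant needed to close $C(\alpha)$ after exactly $n$ steps.

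For part (ii), the strategy is to restrict the iteration in (i) to the first coordinate. Because the swap flips the type (number versus ``$=$''-sign) of every coordinate that is not supplemented, and because by the Dyck-word structure of $g(\alpha)$ the position $0$ is reserved for the closing arc from $L_{2k}(\alpha)$ back to $L_0(\alpha)$ rather than appearing in any $\pi(j)$ for $1\le j\le 2k$, the first coordinate's type alternates at every internal step. Starting from the first entry $0$ of $\underline{L}_0(\alpha)=\underline{F}(\alpha)$, the sets $\{0,1,\ldots,k\}$ and $\{\underline{k},\underline{k-1},\ldots,\underline{1}\}$ are preserved as multisets by the swap, so their elements must enter the first coordinate in a fixed order forced by the Dyck matching of Remark~\ref{uu}; working out the adjacency constraints gives the claimed row vector $(0,\underline{k},1,\underline{k-1},\ldots,\underline{1},k)$. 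For the substring claim, note that in $\underline{F}(\alpha)$ the outermost decomposition $0uk v$ pairs the leading $0$-bit with the matched $1$-bit carrying label $\underline{k}$, which therefore occupies the position next to $k$; dually, the innermost empty Dyck factor pairs the last $0$ before the first closing with its immediate successor, giving $\underline{1}0$ as a substring. Since cyclic translations mod $n$ preserve these adjacencies, $k\underline{k}$ and $\underline{1}0$ appear as substrings mod $n$ of every $\underline{L}_j(\alpha)$.

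Finally, for the uniform 2-factor conclusion a counting argument suffices. By Theorem~\ref{bij} there are $C_k$ $k$-germs $\alpha$, each producing a cycle $C(\alpha)$ of length $n=2k+1$; distinct $\alpha$ lie in distinct $\mathbb{Z}_n$-classes and each $C(\alpha)$ consists of the $n$ cyclic translates of $F(\alpha)$, so the cycles are pairwise vertex-disjoint. The total vertex count is $n\cdot C_k=(2k+1)\binom{2k}{k}/(k+1)=\binom{2k+1}{k}=|V(O_k)|$, giving a spanning $2$-regular subgraph whose cycles all share the uniform length $n$. The principal obstacle is part (i): bridging the purely combinatorial reversal calculus ${\mathcal RS}$ on TPPs with the arc-by-arc traversal of $P(\alpha)$ in $O_k$. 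The crucial step is setting up a simultaneous induction that matches each partially reversed state $h_i(\alpha)$ with a configuration of flippable tuples over the processed portion of $P(\alpha)$, so that every elementary ${\mathcal RS}$ corresponds step-by-step to one arc of $O_k$; once this coupling is in place, (ii) and the 2-factor claim follow by routine bookkeeping.
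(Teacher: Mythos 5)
Your argument for the final 2-factor claim rests on the assertion that each $C(\alpha)$ consists of the $n$ cyclic translates of $F(\alpha)$, from which you deduce that distinct cycles are vertex-disjoint because distinct $k$-germs determine distinct $\mathbb{Z}_n$-classes. That assertion is false, and Remark~\ref{April1st} already warns against it: the lines of $L(\alpha)$ are translates of $n$-tuples $F(\alpha_\iota)$ whose germs $\alpha_\iota$ vary with the line index $\iota$. Concretely, for $k=2$ one has $L_0(\alpha(0))=F(\alpha(0))=``012{=}{=}"$ with support $\{3,4\}$, and the first iteration of item (i) supplements position $\pi(4)=1$, producing $L_1(\alpha(0))=``{=}1{=}02"$ with support $\{0,2\}$; the set $\{0,2\}$ is not a cyclic translate of $\{3,4\}$ (whose translates are the consecutive pairs) but of $\{2,4\}=supp(f(\alpha(1)))$, so $C(\alpha(0))$ already meets both $\mathbb{Z}_5$-classes of $V(O_2)$. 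Hence pairwise disjointness of the $C_k$ cycles does not follow from your class argument, and your count $n\cdot C_k=|V(O_k)|$ only shows that disjointness and spanning are equivalent; it establishes neither. Disjointness is exactly the nontrivial content the paper imports from \cite[Theorem~2]{u2f}, by identifying $L_0(\alpha),L_2(\alpha),\ldots,L_{2k}(\alpha)$ with the Dyck paths having $0,1,\ldots,k$ flaws and the passages between consecutive lines with the maps $g$ and $h$ composing $f=h\circ g$ there.

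For item (i) you correctly isolate what must be shown, namely that the supplementation positions along $P(\alpha)$ are $\pi(2k),\pi(2k-1),\ldots,\pi(1)$, and you propose an induction on the nesting depth of $g(\alpha)$ coupling each ${\mathcal RS}$-step of Remark~\ref{r3} with a flippable tuple; but you then declare this coupling to be ``the principal obstacle'' and leave it unexecuted, so the core of (i) is not proved. Your justification of uniqueness (``any other choice would revisit a previously produced vertex'') is also not the right reason: once the supplementation position is fixed, the neighbour is forced by Remark~\ref{r2}, since its support must be the complement of the union of the current support with the supplemented position. The paper does not attempt your self-contained induction; it obtains (i) by translating \cite[Lemma~5]{Hcs} into the $k$-germ setting, so if you intend a proof independent of \cite{u2f,Hcs}, the coupling you sketch is precisely what has to be supplied. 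Your treatment of (ii) is in the spirit of the paper's, but ``working out the adjacency constraints'' should be replaced by the two explicit recurrences $a_i+b_i=k$ and $b_i+a_{i+1}=k+1$ on the first column, which immediately force the alternating vector $(0,\underline{k},1,\underline{k-1},\ldots,\underline{1},k)$ from $a_0=0$.
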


\begin{proof} Item (i) is an adaptation of \cite[Lemma 5]{Hcs} to the $k$-germ setting of Subsections~\ref{alfalfa}-\ref{Dyck} and~\ref{rev}-\ref{uu} as well as the following argument.

The Dyck path of length $2k$ defined in Subsection~\ref{Dyck} corresponds to the Dyck paths with $2k$ steps and 0 flaws of \cite{u2f}, presented in each list $L(\alpha)$ as $L_0(\alpha)$.

In the same way, $L_2(\alpha),L_4(\alpha),\ldots,L_{2k}(\alpha)$ correspond respectively to the Dyck paths with $2k$ steps and $1,2,\ldots,k$ flaws of \cite{u2f}, obtained in our cases again as in Subsection~\ref{rev} by the removal of its first up-step and change of coordinates from $(1,1)$ to $(0,0)$.

In fact, passing from each $L_{2i}(\alpha)$ to $L_{2i+1}(\alpha)$ corresponds to applying the function $g$ defined in the second paragraph of \cite[Subsection 1.1]{u2f}.

Passing from $L_{2i+1}(\alpha)$ to $L_{2k+2}(\alpha)$ corresponds to applying the function\ $h$ composing the mapping $f=h\circ g$ of Theorem 2 \cite{u2f}.

For item (ii), note that the $n$-tuples $\underline{L}_j(\alpha)$ having a common initial entry in $[0,k]\cup[\underline{1},\underline{k}]$ are at the same height $j$ in all vertical lists $\underline{L}(\alpha)$
so that the entries of the first column $(a_0,\underline{b}_0,a_1,\underline{b}_1,\ldots,a_k,\underline{b}_k,a_{k+1})^T$ of each such $\underline{L}(\alpha)$ satisfy both $a_i+b_i=k$ and $b_i+a_{i+1}=k+1$, for $i\in[0,k]$.

 Thus, the alternating first-entry column in each vertical list characterizes and controls the formation of the claimed uniform 2-factor.
\end{proof}

\subsection{Dyck-word triples and quadruples}\label{r4}

Consider the following Dyck-word collections (triples, quadruple, etc.):
\begin{eqnarray}\label{!}\begin{array}{lllllr}
S_1(w)&=&\{\xi_{1(w)}^1=0w001\underline{1}1,&\xi_{1(w)}^2=0w\underline{0}1101,&\xi_{1(w)}^3=0w0101\underline{1}&\},\\
S_2&=& \{\xi_2^1\hspace{4.4mm}=0\underline{0}110011,&\xi_2^2\hspace{4.4mm}=0010011\underline{1},&\xi_2^3\hspace{4.4mm}=00010\underline{1}11&\},\\
S_3&=& \{\xi_3^1\hspace{4.4mm}=00011\underline{1},&\xi_3^2\hspace{4.4mm}=0100\underline{1}1,&\xi_3^3\hspace{4.4mm}=\underline{0}10101&\},\\
S_4&=&\{\xi_4^1\hspace{4.4mm}=00011\underline{1},&\xi_4^2\hspace{4.4mm}=0010\underline{1}1,&\xi_4^3\hspace{4.4mm}=01\underline{0}011,&\xi_4^4=\underline{0}10101\},
\end{array}\end{eqnarray}

\noindent (based on \cite[display~(4.2)]{Hcs}) where $w$ is any (possibly empty) Dyck word. Consider also the sets $\underline{S}_1(w)$, $\underline{S}_2$, $\underline{S}_3$, $\underline{S}_4$ obtained respectively from $S_1(w)$, $S_2$, $S_3$, $S_4$ by having their component Dyck paths
$\underline{\xi}_{1(w)}^j$, $\underline{\xi}_2^j$, $\underline{\xi}_3^j$, $\underline{\xi}_4^j$
defined as the complements of the reversed strings of the corresponding Dyck paths
$\xi_{1(w)}^j$, $\xi_2^j$, $\xi_3^j$, $\xi_4^j$.
Note that each Dyck word in the subsets of display~(\ref{!}) has an underlined entry. By denoting
\begin{eqnarray}\label{denot}\xi_{i(w)}^j=x_sx_{s-1}\cdots x_2x_1x_0\mbox{  and  }\xi_i^j=x_sx_{s-1}\cdots x_2x_1x_0,\mbox{ for }i=2,3,4,\end{eqnarray} where $j=1,2,3$ for $i=1,2,3$ and $j=1,2,3,4$ for $j=4$ and adequate $s$ in each case, the underlined positions in~(\ref{!}) are the targets of the following correspondence $\Phi$:
\begin{eqnarray}\label{!!}\begin{array}{llll}
\Phi(\xi_{1(w)}^1)=1,&\Phi(\xi_{1(w)}^2)=4,&\Phi(\xi_{1(w)}^3)=0,&\\
\Phi(\xi_2^1)\hspace{4.4mm}=6,&\Phi(\xi_2^2)\hspace{4.4mm}=0,&\Phi(\xi_2^3)\hspace{4.4mm}=2,&\\
\Phi(\xi_3^1)\hspace{4.4mm}=0,&\Phi(\xi_3^1)\hspace{4.4mm}=1,&\Phi(\xi_3^3)\hspace{4.4mm}=5,&\\
\Phi(\xi_4^1)\hspace{4.4mm}=0,&\Phi(\xi_4^2)\hspace{4.4mm}=1,&\Phi(\xi_4^3)\hspace{4.4mm}=3,&\Phi(\xi_4^4)=5.\\
\end{array}\end{eqnarray}
The correspondence $\Phi$ is extended over the Dyck words $\underline{\xi}_{1(w)}^j$, $\underline{\xi}_2^j$, $\underline{\xi}_3^j$, $\underline{\xi}_4^j$ with their barred positions taken reversed with respect to the corresponding barred positions in $\xi_{1(w)}^j$, $\xi_2^j$, $\xi_3^j$, $\xi_4^j$, respectively.

Recall the ordered tree ${\mathcal T}_k$ from Theorem~\ref{thm1}. Adapting from \cite{Hcs}, we define an hypergraph $H_k$ with $V(H_k)=V({\mathcal T}_k)$ and having as hyperedges the subsets $\{\alpha^j;j\in\{1,2,3\}\}\subset V(H_k)$ and $\{\alpha^j;j\in\{1,2,3,4\}\}\subset V(H_k)$ whose member $k$-germs $\alpha^j$ have associated bitstrings $f(\alpha^j)$, for $j=1,2,3\mbox{ or }j=1,2,3,4$, containing respective Dyck words in $\{\xi_{1(w)}^j$, $\xi_2^j$, $\xi_3^j$, $\xi_4^j$, $\underline{\xi}_{1(w)}^j$, $\underline{\xi}_2^j$, $\underline{\xi}_3^j$, $\underline{\xi}_4^j\}$ in the same 6 or 8 fixed positions $x_i$ (for specific indices $i\in\{0,1,\ldots,s\}$ in~(\ref{denot})) and forming respective subsets
$\{\xi_{1(w)}^j(w);j=1,2,3\}$,
$\{\underline{\xi}_{1(w)}^j;j=1,2,3\}$,
$\{\xi_4^j;j=1,2,3,4\}$,
$\{\underline{\xi}_4^j;j=1,2,3,4\}$,
$\{\xi_i^j;j=1,2,3\}$ and
$\{\underline{\xi}_i^j;j=1,2,3\}$, for both $i=2$ and $3$.

\begin{example}\label{ex6} Two hyperedges $h_0,h_1$ of $H_3$ are shown heading on the upper-left and upper-right sides of Fig.~\ref{fig6}, respectively, with strings $\xi_{1(\epsilon)}^j$ or $\xi_i^j$ ($j=3,4$) having their constituent entries in red except for one barred entry, in blue. For $h_0$ (resp., $h_1$), $f(\alpha_0)$,
$f(\alpha_1)$, $f(\alpha_2)$, (resp., $f(\alpha_0)$, $f(\alpha_3)$, $f(\alpha_4)$), represented by the respective subindices 0, 1, 2 (resp., 0, 3, 4), are shown stacked in the upper-left (-right) of the figure, those subindices indicating (each via a colon) respectively the Dyck words $\xi_{1(\epsilon)}^1$, ${\underline{\xi}}_3^2$, $\xi_{1(\epsilon)}^3$ (resp., $\xi_{1(\epsilon)}^3$, $\xi_3^2$, $\xi_4^3$), with their entries in red except for the entries in positions $\Phi(\xi_{1(\epsilon)}^1)=1$, $\Phi({\underline{\xi}}_3^2)=4$, $\Phi(\xi_{1(\epsilon)}^3)=0$ (resp., $\Phi(\xi_{1(\epsilon)}^3)=0$, $\Phi(\xi_3^2)=1$, $\Phi(\xi_4^3)=5$), which are blue. Then $H_3$ contains the connected subhypergraph $H_3'$ depicted
on the lower-left of the figure. This is used to construct the shown Hamilton cycle. The hyperedges of $H'_3$ are denoted by the triples of subindices $i$ of their composing 4-germs $\alpha_i$. So, the hyperedges of $H'_3$ are taken to be $h_0=(0,1,2)$ and $h_1=(0,3,4)$. This type of notation is used in Example~\ref{ex7}, as well.
\end{example}

\begin{figure}[htp]
\hspace*{1.8cm}
\includegraphics[scale=1.28]{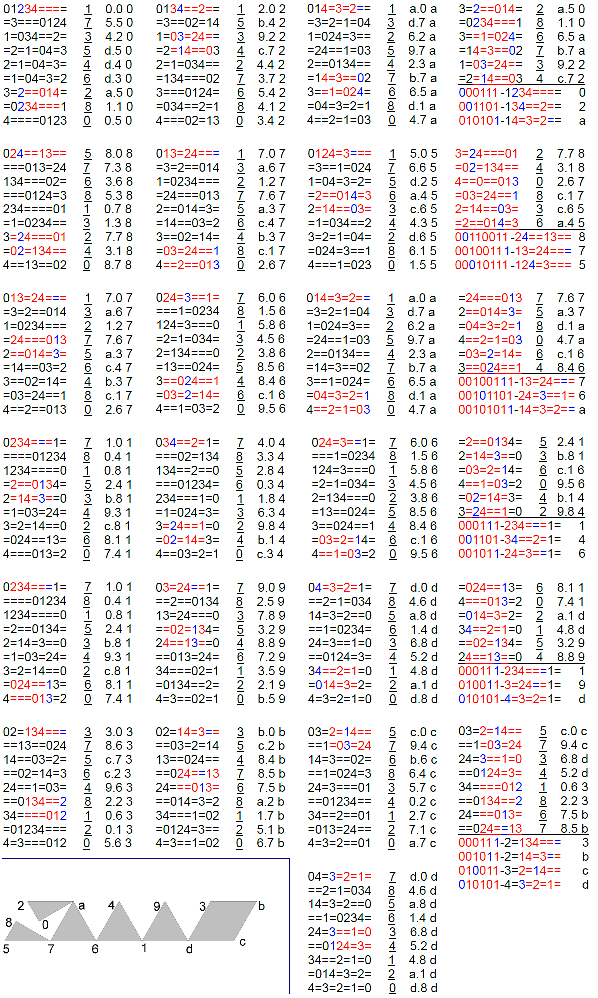}
\caption{Illustration for Section~\ref{verL} and Example~\ref{ex7}}
\label{fig7}
\end{figure}

\begin{example}\label{ex7} For $k=4$, let us represent each $k$-germ $\alpha_i$ by its respective order $i=ord(\alpha_i)$.
In a likewise manner to that of Example~\ref{ex6}. Fig.~\ref{fig7} shows on its lower-left corner a depiction of a subhypergraph $H'_4$ of $H_4$ with the hyperedges
$$h_0=(0,2,a),\; h_1=(8,7,5),\; h_2=(7,6,a),\; h_3=(1,4,6),\; h_4=(1,9,d),\; h_5=(3,b,c,d).$$
The respective triples of Dyck words $\xi_{1(w)}^j$ or $\underline{\xi}_{1(w)}^j$ or $\xi_i^j$ or $\underline{\xi}_i^j$ ($j=2,3,4$) may be expressed as follows by replacing the Greek letters $\xi$ by the values of the correspondence $\Phi$:
$$
(\underline{5}_3^1,\underline{4}_3^2,\underline{0}_3^3),\;
(6_2^1,0_2^2,2_2^3), \;
(1_{1(01)}^1,4_{1(01)}^2,0_{1(01)}^3),\;
(1_{1(\epsilon)}^1,4_{1(\epsilon)}^2,0_{1(\epsilon)}^3),\;
(0_3^1,1_3^2,5_3^3),\;
(0_4^1,1_4^2,3_4^3,5_4^4),$$
where we can also write $(\underline{5}_3^1,\underline{4}_3^2,\underline{0}_3^3)=\underline{(0_3^1,1_3^2,5_3^3)}$.
From top to bottom in Fig.~\ref{fig7}, excluding the said depiction of $H'_4$, the vertical lists corresponding to the composing $4$-germs of those six hyperedges are presented side by side, in a fashion similar to that of Fig.~\ref{fig6}, except that the first line in each such vertical list has its
corresponding substring $\xi$ (a member of one of the sets presented in display~(\ref{!!})) in red but for its blue entry
$\Phi(\xi)$ to stress their roles in the respective $L(\alpha)$ and $FT(\alpha,j)$. The flippable tuples $FT(\alpha,j)$  allow to compose five flipping $6$-cycles and one flipping $8$-cycle, presented to the right of each triple or quadruple of vertical lists, allowing to integrate, by symmetric differences, a Hamilton cycle comprising all the vertices in the cycles provided by the vertical lists. Below those $6$- or $8$-cycles, the corresponding red-blue substrings $\xi_i^j$ appear separated by a hyphen in each case from the associated (multicolored) first lines.

We represent $H_k$ as a simple graph $\psi(H_k)$ with $V(\psi(H_k))=V(H_k)$ by replacing each hyperedge $e$ of $H_k$ by the clique $K(e)=K(V(e))$ so that $\psi(H_k[e])=K(e)$, being such replacements the only source of cliques of $\psi(H_k)$. A {\it tree} $T$ of $H_k$ is a subhypergraph of $H_k$ such that: {\bf(a)} $\psi(T)$ is a connected union of cliques $K(V(e))$; {\bf(b)} for each cycle $C$ of $\psi(H_k)$, there exist a unique clique $K(V(e))$ such that $C$ is a subgraph of $K(e)$. A {\it spanning tree} $T$ of $H_K$ is a tree of $H_k$ with $V(T)=V(H_k)$. Clearly, the subhypergraphs $H'_k$ of $H_k$ depicted in Fig.~\ref{fig6} and~\ref{fig7} for $k=3$ and 4 are corresponding spanning trees.

A subset $G$ of hyperedges of $H_k$ is said to be {\it conflict-free} \cite{Hcs} if: {\bf(a)} any two hyperedges of $G$ have at most one vertex in common; {\bf(b)} for any two hyperedges $g,g'$ of $G$ with a vertex in common, the corresponding images by $\Phi$ (as in display~(\ref{!!})) in $g$ and $g'$ are distinct. A proof of the following final result is included, as our viewpoint and notation differs from that of \cite{Hcs}.
\end{example}

\begin{theorem}\label{L6} (\cite{Hcs}) A conflict-free spanning tree of $H_k$ yields a Hamilton cycle of $O_k$, for every $k\ge 3$. Moreover, distinct conflict-free spanning trees of $H_k$ yield distinct Hamilton cycles of $H_k$, for every $k\ge 6$.
 \end{theorem}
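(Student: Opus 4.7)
The plan is to adapt the argument of \cite{Hcs} to the $k$-germ framework developed above, building on Theorem~\ref{L5}, which provides a uniform 2-factor $\mathcal{F}$ of $O_k$ consisting of the $C_k$ oriented $n$-cycles $C(\alpha)$, one per $k$-germ $\alpha\in V({\mathcal T}_k)$.

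First I would set up a flipping-cycle merge attached to each hyperedge. For a hyperedge $e$ of $H_k$ with $V(e)=\{\alpha^1,\ldots,\alpha^j\}$, $j\in\{3,4\}$, the flippable tuples $FT(\alpha^i)\subset C(\alpha^i)$ at the barred positions $\Phi(\xi^i)$ of display~(\ref{!!}) combine into a flipping cycle $F_\kappa C(e)$ in $O_k$ of length $\kappa=2j$, as illustrated in Examples~\ref{joder} and~\ref{ex7}. The core local identity is that the symmetric difference
$$
C(\alpha^1)\,\triangle\,\cdots\,\triangle\,C(\alpha^j)\,\triangle\,F_\kappa C(e)
$$
is a single oriented cycle spanning $V(C(\alpha^1))\cup\cdots\cup V(C(\alpha^j))$, because $F_\kappa C(e)$ supplies precisely the edges needed to splice the $j$ disjoint $n$-cycles at their $FT(\alpha^i)$.

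Next I would show that iterating this operation over the hyperedges of a conflict-free spanning tree $T$ of $H_k$ produces a single Hamilton cycle of $O_k$. Regard the cycles of $\mathcal{F}$ as nodes of an auxiliary multigraph and each hyperedge $e\in T$ as a gadget that fuses the 3 or 4 cycles indexed by $V(e)$; the tree and spanning axioms ensure that every $C(\alpha)$ is reached and that all are amalgamated into one component. Conflict-freeness is what lets the iterated symmetric differences accumulate cleanly: condition (a) prevents two hyperedges from sharing two vertices (which would reintroduce small sub-cycles via cancellation), and condition (b) forces the $\Phi$-images at any shared $\alpha$ to differ, so no flipping edge is canceled across two merges. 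The resulting 2-regular subgraph is therefore connected and spans $V(O_k)$, hence is a Hamilton cycle.

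For the second assertion, I would prove that for $k\ge 6$ the Hamilton cycle determines $T$. The idea is a local-to-global recovery: each $F_\kappa C(e)$ leaves a distinctive signature of flipping edges in the Hamilton cycle, and for $k\ge 6$ the position constraints in~(\ref{!}) and~(\ref{denot}), together with the availability of sufficiently long Dyck-word parameters $w$ in $S_1(w)$, force these signatures to be pairwise distinguishable across distinct hyperedges. One then reads off from the Hamilton cycle which triples or quadruples of germs were fused, thereby recovering $E(T)$. The main obstacle I anticipate is precisely this recovery step: for small $k$ the parametric family $S_1(w)$ shares support positions with $S_2$, $S_3$, $S_4$, and different spanning trees can collide into the same Hamilton cycle, so the bound $k\ge 6$ must be pinned down by a careful case analysis ruling out all such coincidences, as carried out in detail in \cite{Hcs}.
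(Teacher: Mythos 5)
Your proposal is a reasonable reconstruction of the argument in \cite{Hcs}, but it proves something different from what the paper's own proof actually establishes, so the two routes are worth contrasting. You attack the stated implication head-on: you take an arbitrary conflict-free spanning tree $T$ of $H_k$, invoke the local lemma that $C(\alpha^1)\triangle\cdots\triangle C(\alpha^j)\triangle F_\kappa C(e)$ is a single cycle for each hyperedge $e$, and argue that the tree structure plus conditions (a) and (b) of conflict-freeness let these merges accumulate into one Hamilton cycle; for $k\ge 6$ you sketch a recovery of $T$ from the cycle. The paper's proof, by contrast, never argues the implication at all (it is taken as given from \cite{Hcs}): it is devoted to a recursive \emph{construction} of explicit conflict-free spanning trees $\mathcal{T}_\ell$ of $H_\ell$, via the partition $D_\ell=F_\ell\cup 01E_{\ell-1}\cup 01F_{\ell-1}$, the filtration $F_\ell^2\subset\cdots\subset F_\ell^\ell=F_\ell$, and the connecting flippable triples $\tau(v)=S_1((01)^{j-3})v$ and $\tau=S_3(01)^{\ell-3}$ of displays~(\ref{rec3})--(\ref{rec5}). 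The two arguments are complementary: yours supplies the ``spanning tree $\Rightarrow$ Hamilton cycle'' mechanism but would be vacuous without an existence proof for such trees, while the paper's supplies the existence (and an explicit family of trees, hence explicit Hamilton cycles) but relies on \cite{Hcs} for the mechanism you describe. Two caveats on your side: the key local identity (that the symmetric difference with the flipping cycle is a \emph{single} cycle rather than several) is exactly the content of the flippable-tuple lemmas of \cite{Hcs} and needs to be proved, not just asserted; and your treatment of the distinctness claim for $k\ge 6$ is an acknowledged deferral to the case analysis of \cite{Hcs} rather than an argument, which is acceptable here only because the theorem is explicitly attributed to that source.
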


\begin{proof} Let $D_k$ be the set of all Dyck words of length $2k$ and, recalling display (\ref{!}), let
\begin{eqnarray}\label{rec1}\begin{array}{|l|l|l|}\hline
E_2=\{0101\} \!&\! E_3=S_4 \!&\! E_k=01D_{k-1}, \forall k>3\\
F_2=\{0011\} \!&\! F_3=D_3\setminus E_3=\{001101\}					     \!&\!F_k=D_k\setminus 01D_{k-1}, \forall k>3\\\hline
\end{array}\end{eqnarray}
In particular, $0101(01)^{k-2}\in E_k$ and $0011(01)^{k-2}\in F_k$. Now, let
\begin{eqnarray}\label{rec2}\begin{array}{|l|l|l|l|}\hline
{\mathcal E}_2=\emptyset & \mathcal{E}_3=\{S_4\}       & \mathcal{T}_3=\{S_1(\epsilon),S_3\}&\mathcal{E}_k=01\mathcal{T}_{k-1},  \forall k>3\\
{\mathcal F}_2=\emptyset & \mathcal{F}_3=\emptyset & \mathcal{F}_4=\{S_1(01),S_2,0\underline{S}_31,S_1(\epsilon)01\}&\\\hline
\end{array}\end{eqnarray}
Let us set $\mathcal{F}_k$ as a function of $\mathcal{E}_2,\ldots,\mathcal{E}_{k-1},\mathcal{F}_2,\ldots,\mathcal{F}_{k-1},\mathcal{T}_{k-2}$, as follows:
For $1<j\le k$, let $F_k^j=\cup_{i=2}^j\{0\underline{u}1v;u\in D_{i-1},v\in D_{k-1}\}$. Since $F_k=F_k^k$, then the following implies the existence of a spanning tree of $H_k[F_k]$.

\begin{lemma} For every $1<j\le k$, there exists a spanning tree $\mathcal{F}_k^j$ of $H_k[F_k^j]$.
\end{lemma}

\begin{proof}
Lemma 7 \cite{Hcs} asserts that if $\tau$ is a flippable tuple and $u,v$ are Dyck words, then: {\bf(i)} $u\tau v$ is a flippable tuple if $|u|$ is even; {\bf(ii)} $u\underline{\tau}v$ is a flippable tuple if $|u|$ is odd. Lemma 8 \cite{Hcs} insures that the triples and quadruples in (\ref{!}) are flippable tuples. Using those two lemmas of \cite{Hcs}, we define
$\Psi$ as the set of all such flippable tuples $u\tau v$ and $u\underline{\tau}v$.
Moreover, we define $\Psi_2=\emptyset$ and $\Psi_k=\Psi\cap D_k$, for $k>2$.

Since $F_k^2=0011D_{k-2}$, we let $\mathcal{F}_k^2=0011\mathcal{T}_{k-2}$. Assuming $2<j\le k$, since $D_{j-2}=E_{j-1}\cup F_{j-1}$ is a disjoint union, then we have the following partition:
\begin{eqnarray}\label{quelio}F_k^j=F_k^{j-1}\cup_{v\in D_{k-j} }(0\underline{D}_{j-1}1v)=F_k^{j-1}\cup_{v\in D_{k-j} }((0\underline{E}_{j-1}1v)\cup(0\underline{F}_{j-1}1v)).\end{eqnarray}
For every $v\in D_{k-j}$, the elements of $\tau(v)=S_1((01)^{j-3})v\in\Psi_k$ are:
\begin{eqnarray}\label{rec3}\begin{array}{|c|c|c|}\hline
0(01)^{j-3}001\underline{1}1v\in 0\underline{F}_{j-1}1v & 0(01)^{j-3}0101\underline{1}v\in 0\underline{E}_{j-1}1v & 0(01)^{j-3}\underline{0}1101v\in F_k^{j-1}\\\hline\end{array}\end{eqnarray}
Now, we let
\begin{eqnarray}\label{formula}\mathcal{F}_k^j=\mathcal{F}_k^{j-1}\cup(\cup_{v\in D_{k-j}}(\{\tau(v)\}\cup (0\underline{\mathcal{E}}_{j-1}1v)\cup (0\underline{\mathcal{F}}_{j-1}1v))),\end{eqnarray} which defines a spanning tree of $H_k[F_k^j]$. \end{proof}

Now, the elements of $\tau=S_3(01)^{k-3}\in\Psi_k$ are:
\begin{eqnarray}\label{rec4}\begin{array}{|c|c|c|}\hline
00011\underline{1}(01)^{k-3}\in F_k$, ($k>3) & 0100\underline{1}1(01)^{k-3}\in 01E_{k-1} & \underline{0}10101(01)^{k-3}\in 01F_{k-1}\\\hline\end{array}\end{eqnarray}

The sets $F_k$, $01E_{k-1}$ and $01F_{k-1}$ form a partition of $D_k$. We take the spanning trees of the subhypergraphs induced by these three sets and connect them into a single spanning tree of $H_k$ by means of the triple $\tau$, that is:
\begin{eqnarray}\label{rec5}H'_k=\mathcal{F}_k\cup\{\tau\}\cup 01\mathcal{E}_{k-1}\cup 01\mathcal{F}_{k-1}.\end{eqnarray}\end{proof}

\begin{example} Example~\ref{ex6} uses $\mathcal{T}_3$ in display~(\ref{rec2}), with $S_1(\epsilon)=012$ and $S_3=034$ yielding the hypergraph $\mathcal{T}_3$ depicted in the lower left of Fig.~\ref{fig6}. Example~\ref{ex7} uses $H'_k$ in display~(\ref{rec5}) for $k=4$, $\mathcal{F}_4$ and $\mathcal{E}_3$ in display~(\ref{rec2}) and $\tau$ in display~(\ref{rec4}), with $S_1(01)=67a$, $S_2=875$, $0\underline{S}_31=02a$, $S_1(\epsilon)=146$, being these four triples the elements in $\mathcal{F}_4$; $01S_4=3bcd$, this one as the only element of $01\mathcal{E}_3$, (while $\mathcal{F}_3=\emptyset$); and $\tau=02a$, yielding the hypergraph $H'_4$ depicted at the lower left corner of Fig.~\ref{fig7}.\end{example}

\begin{corollary}\label{the-end}  To each Hamilton cycle in $O_k$ produced by Theorem~\ref{L6} corresponds a Hamilton cycle in $M_k$.
\end{corollary}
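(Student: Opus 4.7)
The plan is to exploit that $M_k$ is the bipartite double cover of $O_k$ and to carry out the construction of Theorem~\ref{L6} verbatim at the level of $M_k$. The covering map $\pi\colon M_k\to O_k$ sends $A\in L_k$ to $A$ itself and $B\in L_{k+1}$ to $\mathbb{Z}_n\setminus B$, and since $M_k$ is bipartite with parts $L_k,L_{k+1}$, a standard covering argument yields that every odd cycle of $O_k$ lifts to a single cycle of double length in $M_k$, while every even cycle lifts to two vertex-disjoint isomorphic copies.

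First I would lift the uniform 2-factor of Theorem~\ref{L5}. Each basic cycle $C(\alpha)$ has odd length $n=2k+1$, so it lifts to a single $2n$-cycle $C^M(\alpha)$ (as already exhibited for $k=1,2$ in Example~\ref{rrr2}). Their union $\mathcal{F}^M=\bigcup_\alpha C^M(\alpha)$ is a uniform 2-factor of $M_k$ with $C_k$ cycles whose edge set is exactly $\pi^{-1}(\mathcal{F})$, where $\mathcal{F}$ denotes the uniform 2-factor of $O_k$.

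Next, each flipping $\kappa$-cycle $T$ ($\kappa\in\{6,8\}$) from the conflict-free spanning tree produced by Theorem~\ref{L6} alternates edges of $\mathcal{F}$ with non-$\mathcal{F}$-edges of $O_k$. Being even, $T$ lifts to two $\kappa$-cycles $T^1,T^2$ in $M_k$; since $\pi$ preserves membership in the 2-factor, each $T^i$ alternates with respect to $\mathcal{F}^M$. I would pick a single lift $T^1$ per flipping cycle and form $\mathcal{F}^M\,\Delta\,\bigcup_T T^1$. The central observation is that each $\mathcal{F}$-edge of $T$ contributes exactly one of its two lifts to $T^1$, and that lift lies inside the unique cycle $C^M(\alpha^j)$ covering $C(\alpha^j)$. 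Hence $T^1$ meets the same three or four basic cycles of $\mathcal{F}^M$ as $T$ meets of $\mathcal{F}$, in the same cyclic arrangement, and the XOR with $T^1$ merges those $C^M(\alpha^j)$ into a single cycle of length $3\cdot 2n$ or $4\cdot 2n$, exactly paralleling the merging effected by $T$ in $O_k$. Iterating along the whole spanning tree reduces the cycle count of $\mathcal{F}^M$ from $C_k$ down to $1$, producing a single cycle of length $2nC_k=|V(M_k)|$, namely a Hamilton cycle of $M_k$ placed in correspondence with the Hamilton cycle of $O_k$.

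The main obstacle will be justifying the one-lift choice: taking both lifts $T^1\cup T^2$ would annihilate modulo $2$ and produce no merging at all, while taking only $T^1$ yields the alternating cycle in $M_k$ that implements the required merge. Conflict-freeness of the spanning tree makes distinct flipping cycles share at most one vertex, so the independent lift choices cause no interference, and the hypergraph-based merging accounting of Theorem~\ref{L6} transfers unchanged to $M_k$.
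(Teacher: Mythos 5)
Your proposal is correct, and it reaches the Hamilton cycle of $M_k$ by the same underlying mechanism as the paper, namely lifting the whole Theorem~\ref{L6} construction through the double cover $M_k\to O_k$; the difference lies in how the lift is formalized. The paper realizes it concretely at the string level: each list $L(\alpha)$ becomes a doubled list $L^M(\alpha)$ by replacing the ``$=$''-signs with ``$>$'' on even lines and ``$<$'' on odd lines, the ``$<$''-lines being read right-to-left and complemented --- which is exactly your covering map $\pi$ in bitstring clothing --- and then asserts that the flipping-tuple symmetric differences admit ``adequate liftings'' to these doubled lists because the local behavior ``happens twice around opposite short subpaths.'' Your covering-space framing makes explicit the two facts that phrasing leaves implicit: the $n$-cycles $C(\alpha)$, being odd, lift to single $2n$-cycles $C^M(\alpha)$, so the lifted 2-factor still has exactly $C_k$ components and the spanning-tree bookkeeping of $H_k$ transfers verbatim; and each flipping $\kappa$-cycle, being even, lifts to \emph{two} disjoint copies, of which exactly one must be chosen for the symmetric difference (taking both would cancel and merge nothing). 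That one-lift selection, together with your remark that conflict-freeness keeps the chosen lifts from interfering, is precisely the substantive content hidden behind the paper's ``adequate liftings,'' so your version is, if anything, the more complete argument, while the paper's buys an explicit, implementable description of the resulting cycle in terms of the $L^M(\alpha)$ lists and the modular colors.
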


\begin{proof}
For each vertical list $L(\alpha)$ provided by Theorem~\ref{L5},  let $L^M(\alpha)$ be a vertical list as exemplified in Example~\ref{rrr2} and Fig.~\ref{fig3}, which is obtained from $L(\alpha)$ by replacing its ``$=$" signs by: {\bf(a)} ``$>$" signs (meaning left-to-right string-reading) for the strings $L_{2j}(\alpha)$ ($j\in[0,k]$) of $L(\alpha)$ and {\bf(b)} ``$<$" signs (meaning right-to-left string-reading) for the strings $L_{2j+1}(\alpha)$ ($j\in[0,k-1]$) of $L(\alpha)$.  Then, Theorem~\ref{L6} can be adapted to producing Hamilton cycles in the $M_k$ by repeating the argument in its proof in replacing the lists $L(\alpha)$ by lists $L^M(\alpha)$, since they have locally similar behavior, being the cycles provided by the lists $L^M(\alpha)$ twice as long as the corresponding lists $L(\alpha)$, so the said local behavior happens twice around opposite (rather short) subpaths. Combining Dyck-word triples and quadruples as in display~(\ref{!}) into adequate pullback liftings (of the covering graph map $M_k\rightarrow O_k$ associated to item (ii), Section~\ref{s1}) in the lists $L^M(\alpha)$ of those parts of the lists $L(\alpha)$ in which the necessary symmetric differences take place to produce the Hamilton cycles in $O_k$ will produce corresponding Hamilton cycles in $M_k$.
\end{proof}

\noindent{\bf Historical Note.} The $k$-edge ordered trees appearing in \cite[p.~221, item~(e)]{Stanley} as ``plane trees with" $k+1$ vertices and in \cite{gmn} as ``ordered rooted trees", represent Dyck paths of length $2k$ (see Subsection~\ref{Dyck}). These trees are equivalent to $k$-strings $0b_{k-1}\cdots b_1$ called $k$-{\it RGS}'s in \cite{D2} and tailored from the RGS's of Section~\ref{germs} via items (r) and (u) in \cite[p.~222]{Stanley} in a different way from that of the $k$-germs of Section~\ref{germs}. An equivalence of these $k$-germs and those $k$-RGS's was presented in \cite{D2} via their distinct relation to the $k$-edge ordered trees, whose purpose in \cite{gmn,M} was using their plane rotations toward Hamilton cycles in $M_k$, not related to the odd-graph approach to Hamilton cycles of \cite{Hcs} to which we applied our ideas in Section~\ref{verL}.








\begin{thebibliography}{99}

\bibitem{Arndt} J. Arndt, Matters Computational: Ideas, Algorithms, Source Code, Springer, 2011.

\bibitem{B} N. Biggs, Norman, {\it Some odd graph theory}, Annals of the New York Academy of Sciences, {\bf 319} (1979), 71--81.

\bibitem{DD} I. J. Dejter, {\it On coloring the arcs of biregular graphs}, Applied Discrete Math., {\bf 284} (2018), 489--498.

\bibitem{D2} I. J. Dejter, {\it A numeral system for the middle-levels graphs}, Electronic Journal of Graph Theory and Applications, {\bf 9} (2019), 137-156.

\bibitem{D1} I. J. Dejter, {\it Reinterpreting the middle-levels theorem via natural enumeration of ordered trees}, Open Journal of Discrete Applied Mathematics, {\bf 3} (2020), 8--22.

\bibitem {GR} C. Godsil and G. Royle, Algebraic Graph Theory, Springer 2011.

\bibitem{DKS} D. A. Duffus, H. A. Kierstead and H. S. Snevily, {\it An explicit 1-factorization in the
middle of the Boolean lattice}, Journal of Combinatorial Theory, Ser. A, {\bf 65} (1994), 334--342.

\bibitem{KT} H.\ A.\ Kierstead and W.\ T.\ Trotter, {\it Explicit matchings in the middle levels of the Boolean lattice}, Order, {\bf 5} (1988), 163--171.

\bibitem{gmn} P. Gregor, T. M\"utze and J. Nummenpalo, {\it A short proof of the middle levels theorem}, Discrete Analysis, 2018:8, 12pp.

\bibitem{M} T. M\"{u}tze, {\it Proof of the middle levels conjecture}, Proceedings of the London Mathematical Society, {\bf112} (2016) 677--713.

\bibitem{u2f}
T. M\"utze, C. Standke, and V. Wiechert,  {\it A minimum-change version of the Chung--Feller theorem for
Dyck paths}, European J. Combin., {\bf 69} (2018), 260--275.

\bibitem{Hcs} T. M\"utze, J. Nummenpalo and B. Walczak, {\it Sparse Kneser graphs are hamiltonian}, Journal of the London Mathematical Society,  {\bf 103} (2021),
1253--1275.

\bibitem{oeis} N.\ J.\ A.\ Sloane, The On-Line Encyclopedia of Integer Sequences, \url{http://oeis.org/}.

\bibitem{Stanley} R. Stanley, Enumerative Combinatorics, Volume 2, (Cambridge Studies in Advanced Mathematics Book 62), Cambridge University Press, 1999.
\end{thebibliography}
\end{document}